\newif\ifarXiv
\theoremstyle{plain}
\newtheorem{theorem}{Theorem}[section]
\newtheorem{lemma}[theorem]{Lemma}
\newtheorem{corollary}[theorem]{Corollary}
\theoremstyle{definition}
\newtheorem{definition}[theorem]{Definition}
\newtheorem{example}[theorem]{Example}
\newtheorem{problem}[theorem]{Problem}
\theoremstyle{remark}
\newcommand{\Gv}[1]{(G,\{v_#1\})}
\newcommand{\Gr}[1]{(K_r,\{v_#1\})}
\newcommand{\Gk}[2]{(#1,#2)}
\newcommand{\Gg}[2]{(#1,\{v_#2\})}
\newcommand{\cp}[2]{#1 ~\!\square~\! #2}
\newcommand{\st}[2]{#1 \boxtimes~\!\! #2}
\author{Tomer Kotek\thanks{Partially supported by the Fein foundation, the graduate school of the Technion, the Austrian National Research Network S11403-N23 (RiSE) of the Austrian Science Fund (FWF), and by the Vienna Science and Technology
Fund (WWTF) grant PROSEED.}\\
\small Faculty of Informatics,\\
\small Vienna University of Technology,\\
\small Favoritenstr. 9-11, A-1040 Vienna, Austria\\
\small\tt kotek@forsyte.at \\
\and
James Preen\\
\small Mathematics\\[-0.8ex]
\small Cape Breton University\\[-0.8ex]
\small Sydney, NS B1P6L2, Canada\\
\small\tt james\_preen@capebretonu.ca\\
\and
Peter Tittmann \\
\small Faculty Mathematics / Sciences / Computer Sciences \\[-0.8ex]
\small Hochschule Mittweida - University of Applied Sciences \\[-0.8ex]
\small Mittweida, Germany\\
\small\tt peter@hs-mittweida.de\\
}
\date{\dateline{Jan 1, 2012}{Jan 2, 2012}\\ 
\small Mathematics Subject Classifications: 05C31, 05C69, 05C76}
\begin{document}
\title{\bf Domination Polynomials of Graph Products}
\maketitle

\abstract{
The domination polynomials of binary graph operations, aside from union, join and corona,
have not been widely studied. We compute and prove recurrence formulae and properties of the domination polynomials 
of families of graphs obtained by various products, ranging from explicit formulae and recurrences for specific families to more general results. As an application, we show the domination polynomial is computationally hard to evaluate. 
}

\section{Introduction and Defintions}

This paper discusses simple undirected graphs $G=(V,E)$. 
A vertex subset $W \subseteq V$ of $G$ is a \emph{dominating
  set} in $G$, if for each vertex $v \in V$ of $G$ either
$v$ itself or an adjacent vertex is in $W$. 
\begin{definition}
Let $G = (V, E)$ be a graph.
The
\emph{domination polynomial} $D(G, x)$ is 
given by 
\[
 D(G,x)=\sum_{i=0}^{|V|} d_i(G) x^i\,,
\]
where $d_i(G)$ is the number of dominating sets of size $i$ in $G$. 
The \emph{domination number} of a graph $G$, denoted $\gamma(G)$, is the smallest $i$ such that $d_i(G)>0$.
\end{definition}

In \cite{ar:KPT1} we showed that there exist recurrence relations for the domination polynomial which allow for efficient schemes to compute the polynomial for some types of graphs. A recurrence for the domination polynomial of the {\em path graph} with $n$ vertices ($P_n$) was shown in \cite{ar:AlikhaniPengCertainII2010} to be 
\begin{equation}\label{e:path}
D(P_{n+1},x) = x( D(P_{n},x) + D(P_{n-1},x) +D(P_{n-2},x) )
\end{equation}
where $D(P_0,x) = 1$, $D(P_1,x) = x$ and $D(P_2,x) = x^2 + 2x$. 
Note that the complete graphs $K_j \cong P_j$ for $0\leq j\leq2$ 
and that $D(K_r,x) = (x+1)^r-1$.

Given any two graphs $G$ and $H$ we define the {\em Cartesian product}, denoted $\cp{G}{H}$, to be 
the graph with vertex set $V(G) \times V(H)$ and edges between two vertices $(u_1,v_1)$ and $(u_2,v_2)$ if and only
if either $u_1=u_2$ and $v_1v_2 \in E(H)$ or $u_1u_2 \in E(G)$ and $v_1=v_2$. 
For $S\subseteq V(G)$ and $T \subseteq V(H)$ let $(S,T)$ be the subgraph of $\cp{G}{H}$ containing all
vertices $(u,v)$ such that $u \in S$ and $v \in T$. 

This product is well known to be commutative and, if $G$ 
is a disconnected graph with components $G_1$ and $G_2$, then $\cp{G}{H} = (\cp{G_1}{H}) \cup (\cp{G_2}{H})$, so that 
\[
D(\cp{G}{H},x) = D(\cp{G_1}{H},x)  D(\cp{G_2}{H},x).
\]
 Despite these properties, it is difficult to determine much about this product, even in such simple cases as the grid graphs $\cp{P_n}{P_m}$, especially in the case of dominating sets.
The strong product ($\st{G}{H}$) is the graph which is formed by taking the graph $\cp{G}{H}$ and then
additionally adding edges between vertices $(u_1,v_1)$ and $(u_2,v_2)$ if both $u_1u_2 \in E(G)$ and $v_1v_2 \in E(H)$.

The domination numbers of graph products have been extensively studied in the literature, see e.g \cite{ar:Faudree1990,ar:Klavzar1995,ar:Aharoni2009,ar:Benecke2010,ar:Clark00,ar:Gravier1995,ar:Sun2004,ar:Gravier1997,ar:JacobsonKinch83,Liu2010,ar:Mollard2012,ar:Clark12}. 
In particular, a large number of papers have addressed the domination number of Cartesian products, inspired by the conjecture by V. G. Vizing \cite{ar:Vizing1968} that $\gamma(\cp{G}{H}) \geq \gamma(G)\times\gamma(H)$ (see \cite{ar:VizingSurvey2012} for a recent survey.)
In contrast, although the domination polynomial has been actively studied in recent years, almost no attention has been given to the domination polynomials of graph products.

 The \emph{closed neighborhood}
$N_G[W]$ of a vertex set $W$ in $G$ contains $W$ and all vertices adjacent to vertices in $W$. When $W=\{v\}$ we will write
$N_G[v]$ or just $N[v]$ if the graph we are working in is obvious. We define $N_G(W)$ as the \emph {open neighbourhood}
which includes all neighbours of $W$ that are not in $W$, so that $N_G(W) := N_G[W]\setminus W$.
If $S$ is a set of vertices from $G$ we use $G-S$ to mean the graph resulting from the deletion of all vertices in $S$ from $G$, 
and let $G-v$ be $G-\{v\}$.  
The {\em vertex contraction} $G/v$ denotes the graph obtained from $G$ by the removal of $v$ and 
the addition of edges between any pair of non-adjacent neighbors of $v$. 

An outline of the paper is as follows. In section \ref{ss:gk2} a decomposition formulae for an arbitrary graph's Cartesian product with $K_2$  is given and also its strong product with $K_r$ for any $r\geq 2$.
In Section \ref{se:kk} we give exact formulae for families of Cartesian products of complete graphs.
Section \ref{se:pk2} gives a recurrence relation for any graph family which contains $\cp{P_n}{K_2}$ that 
uses only six smaller graphs.
A generalisation of the result in section \ref{se:pk2} is given in section \ref{se:pnkr}, where we give a recurrence for $\cp{P_n}{K_r}$. In section \ref{se:seq}, we discuss why recurrence relations can be deduced to exist for many graph products and show implications of their existence to properties of sequences of coefficients of the domination polynomial. Finally, we use a result 
from the paper to show the Turing hardness of the domination polynomial.

\section{Domination polynomials of products with Complete Graphs}\label{ss:gk2}

Let us suppose that $V(K_2):=\{v_1, v_2\}$ in the product $\cp{G}{K_2}$ and let $G$ be any non-null graph.
We will concentrate first on the vertices in $\Gv{1}$:
every vertex subset $W$ of $\Gv{1}$ can be a subset of some dominating set $S$ in $\cp{G}{K_2}$ so long as some vertices in $\Gv{2}$ are included in $S$ as described below.
Let $W\subseteq V(G)$, so, by definition, $\Gg{W}{1}$ dominates the vertices in $\Gg{N_G[W]}{1}$ 
and those in $\Gg{W}{2}$. 
For $S$ to dominate, all other vertices $(y,v_1)$ must then be dominated by $(y,v_2)$, their only neighbor outside of $\Gv{1}$.

\begin{theorem}\label{t:GK2}
The domination polynomial for 
$D(\cp{G}{K_2},x)$ is equal to:
\[
\frac{x^{|V(G)|}}{x+1}\times \sum_{W \subseteq V(G)}   \frac{  \left( 
D( J_W/z,x ) + D( J_W-N_{J_W}[z],x ) + D( J_W,x) - D( J_W-z,x ) \right)
}{x^{|N_G(W)|}}
\]
where $J_W$ is the subgraph of $G$ induced by $N_G[W]$ with a new vertex
$z$ joined to the union of $W$ and $N(V(G)-N_G[W])$.
\end{theorem}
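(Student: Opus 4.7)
The plan is to decompose dominating sets $S$ of $\cp{G}{K_2}$ by the subset $W := \{y \in V(G) : (y, v_1) \in S\}$ and, for each $W$, count the compatible completions $T := \{y \in V(G) : (y, v_2) \in S\}$. The preamble already establishes that $A := V(G) \setminus N_G[W]$ must lie inside $T$, since every vertex $(y,v_1)$ with $y \in A$ has no neighbour in $\Gv{1}$ belonging to $S$ and only $(y,v_2)$ as a neighbour outside $\Gv{1}$. Writing $T' := T \setminus A \subseteq N_G[W]$, the leftover condition for $S$ to dominate is that every $y \in N_G(W) \setminus N_G(A)$ lies in $T' \cup N_G(T')$, because vertices of $W$ are dominated in $\Gv{2}$ via their twins and vertices of $N_G(W) \cap N_G(A)$ are already dominated by $A$. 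Setting $F_W(x) := \sum_{T'} x^{|T'|}$ for the weighted count of valid $T'$ and using $|W|+|A| = |V(G)| - |N_G(W)|$, one obtains
\[
 D(\cp{G}{K_2},x) = x^{|V(G)|} \sum_{W \subseteq V(G)} \frac{F_W(x)}{x^{|N_G(W)|}},
\]
so the theorem reduces to the identity $(x+1)F_W(x) = D(J_W/z,x) + D(J_W - N_{J_W}[z],x) + D(J_W,x) - D(J_W - z,x)$.

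I plan to derive this identity by obtaining two independent expressions for $F_W(x)$ and adding them so that an auxiliary quantity cancels. The first expression uses $J_W$ directly: since the neighbours of $z$ in $J_W$ are precisely $W \cup N_G(A)$, a set $T' \subseteq N_G[W]$ is valid if and only if $T' \cup \{z\}$ dominates $J_W$. Therefore $xF_W(x)$ equals the part of $D(J_W,x)$ contributed by dominating sets containing $z$. The part contributed by dominating sets avoiding $z$ equals $D(J_W - z,x) - H(x)$, where $H(x)$ counts dominating sets of $J_W - z = G[N_G[W]]$ that are contained in $N_G(W) \setminus N_G(A)$ (equivalently, dominating sets of $J_W - z$ which, viewed back in $J_W$, fail to dominate $z$). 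This yields $xF_W(x) = D(J_W,x) - D(J_W - z,x) + H(x)$.

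For the second expression I use $J_W/z$ and split $F_W$ according to whether $T' \cap (W \cup N_G(A))$ is empty. In the empty case, $T'$ sits inside $N_G(W) \setminus N_G(A)$ and dominates this set via its own induced edges, contributing exactly $D(J_W - N_{J_W}[z],x)$. In the nonempty case, the contracted clique on $W \cup N_G(A)$ in $J_W/z$ guarantees that any $U \subseteq N_G[W]$ meeting $W \cup N_G(A)$ automatically dominates all of $W \cup N_G(A)$ in $J_W/z$, so the condition that $U$ dominates $J_W/z$ collapses to the condition that $U$ dominates $N_G(W) \setminus N_G(A)$ inside $G[N_G[W]]$ — which is exactly the $F_W$ condition. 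The remaining dominating sets of $J_W/z$, those disjoint from $W \cup N_G(A)$, turn out to be precisely the sets counted by $H(x)$, giving $F_W(x) = D(J_W/z,x) - H(x) + D(J_W - N_{J_W}[z],x)$. Adding the two expressions for $F_W$ eliminates $H(x)$ and produces the desired identity.

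The main obstacle is the case analysis inside $J_W/z$: one must verify that when $U$ intersects $W \cup N_G(A)$, the new clique edges make domination of $W \cup N_G(A)$ free and reduce domination in $J_W/z$ to domination of $N_G(W) \setminus N_G(A)$ in the uncontracted subgraph; and conversely that when $U$ misses $W \cup N_G(A)$, the domination condition in $J_W/z$ is equivalent to full domination of $N_G[W]$ in $G[N_G[W]]$, which is precisely what $H(x)$ enumerates. Checking these two equivalences carefully — they are really statements about which edges of $J_W/z$ are "new" versus inherited from $G[N_G[W]]$ — is the only genuinely delicate step; once they are in place the rest is addition of two polynomial identities.
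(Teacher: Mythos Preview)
Your proposal is correct and follows the same decomposition as the paper: fix $W$ on the first layer, force $A=V(G)\setminus N_G[W]$ into the second layer, and reduce to counting $T'\subseteq N_G[W]$ that dominate $T_W=N_G(W)\setminus N_G(A)$, which is equivalent to $T'\cup\{z\}$ dominating $J_W$. The only difference is in the final identity: the paper observes that your $H(x)$ is precisely the polynomial $p_z(J_W,x)$ from \cite{ar:KPT1} and quotes the recurrence $p_z(J_W,x)=\frac{xD(J_W/z,x)+xD(J_W-N_{J_W}[z],x)+D(J_W-z,x)-D(J_W,x)}{x+1}$ from that paper, whereas your two expressions for $F_W$ (and their sum eliminating $H$) amount to a self-contained re-derivation of that recurrence in this instance.
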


\begin{proof}
Suppose that $W\subseteq V(G)$, so that, as above, we know that 
in any dominating set for $\cp{G}{K_2}$ if the
only vertices from $\Gv{1}$ are $W$ then we must also include 
$\Gk{V(G)-N_G[W]}{v_2}$ . In this way all vertices
in $\Gv{1}$ are dominated 
by these 
\[
|W| + |V(G)\setminus N_G[W]| = |W| + |V(G)|-|N_G[W]| =  |V(G)|-|N_G(W)|
\]
 vertices, giving the powers of $x$ as in the theorem. 

It now remains to ensure that all of the vertices in 
$\Gv{2}$ are dominated. Using the vertices forced
to dominate $\Gv{1}$ we see that, in $\Gv{2}$,
 every vertex in either $W$ or in $N[V(G)-N[W]]$ is dominated.
The only vertices not dominated are therefore those 
which are not in $W$ but have no neighbours outside of $N[W]$.
Let us call this set $T_W$.

We now introduce the graph $J_W$ which is formed by taking the
subgraph of $G$ induced by $N[W]$ and adding a new vertex $z$
which is adjacent to every vertex either in $W$ or $N(V(G)-N[W])$.
The vertices which $z$ is joined to are exactly those {\em not} in $T_W$.
Thus we want to count all sets of vertices in $J_W \setminus \{z\}$ such
that $T_W$ is dominated.

From \cite{ar:KPT1}, $p_z(J_W,x)$ generates the dominating sets
for $J_W - N[z]$ which additionally dominate the vertices of $N(z)$.
Each of these sets when combined with $z$ is a dominating set for $J_W$ in
which $T_W$ is dominated and $z$ is only dominated by itself.
All other sets which dominate $T_W$ must then include a vertex from $N(z)$ and hence
they will be a dominating set for both $J_W$ and $J_W -z$. 
The difference of domination polynomials $D(J_W,x) - D(J_W - z,x)$ generates all such
sets which include $z$ and so $p_z(J_W,x) + D(J_W,x) - D(J_W-z,x)$ generates
all sets of vertices in $J_W$ that dominate $T_W$ and include $z$.

Since $z$ is not adjacent to any vertex of $T_W$ 
the generating function counting all sets of vertices in $J_W\setminus \{z\}$ such
that $T_W$ is dominated satisfies the following relation, using Theorem
2.1 of \cite{ar:KPT1}  for the expansion of $p_z(J_W,x)$: 
\begin{eqnarray*}
 &&\frac{p_z(J_W,x) + D(J_W,x) - D(J_W-z,x)}{x} \\
&=&\frac{ x D(J_W/z,x) + x D(J_W-N_{J_w}[z],x) + D(J_W-z,x) -D(J_W,x)}{x(x+1)}
\\ && ~+~ \frac{ D(J_W,x) - D(J_W-z,x)}{x} \\
&=&
\frac{ D(J_W/z,x) +D(J_W-N_{J_W}[z],x) }{x+1}
 + \frac{ D(J_W,x) - D(J_W-z,x)}{x+1} \\
\end{eqnarray*}
Putting this together with our first observation finishes the proof.
\end{proof}

The following result was also proven independently in \cite{ar:brown} as their Lemma 3:

\begin{theorem}\label{t:spgkr}
For any graph $G$ 
\[
D(\st{G}{K_r},x) = D(G,(x+1)^r-1)
\]
\end{theorem}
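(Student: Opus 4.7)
The plan is to exploit the clique-blow-up structure of $\st{G}{K_r}$. For each $u\in V(G)$, call $C_u=\{u\}\times V(K_r)$ its \emph{fiber}. Because $K_r$ is complete, any two distinct vertices in $C_u$ are adjacent (via the "$u_1=u_2$, $v_1v_2\in E(K_r)$" clause), so each fiber is a clique. Moreover, two vertices in distinct fibers $C_u$ and $C_{u'}$ are adjacent in $\st{G}{K_r}$ iff $uu'\in E(G)$: if $uu'\in E(G)$ then both $v_1=v_2$ and $v_1\ne v_2$ make them adjacent in the strong product, while if $uu'\notin E(G)$ no edge is provided by any clause. Thus $\st{G}{K_r}$ is obtained from $G$ by blowing up each vertex into a clique $K_r$ and replacing each edge $uu'$ of $G$ by the complete bipartite join between $C_u$ and $C_{u'}$.

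The first key step is to characterize dominating sets of $\st{G}{K_r}$ in terms of dominating sets of $G$. Given $S\subseteq V(\st{G}{K_r})$, let $W(S):=\{u\in V(G):S\cap C_u\ne\emptyset\}$. I claim $S$ dominates $\st{G}{K_r}$ iff $W(S)$ dominates $G$. Indeed, every vertex of $C_u$ is dominated precisely when $S$ hits $\bigcup_{u'\in N_G[u]}C_{u'}$, because the closed neighborhood of any $(u,v)$ is exactly this union (the fiber $C_u$ is a clique and fibers over $N_G(u)$ are fully joined to $C_u$). Translating, vertex $(u,v)$ is dominated iff $W(S)\cap N_G[u]\ne\emptyset$, so $S$ is dominating iff $W(S)$ is a dominating set of $G$.

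The second step is to group the enumeration of dominating sets $S$ by the set $W=W(S)$. For a fixed dominating set $W$ of $G$, the dominating sets $S$ with $W(S)=W$ are in bijection with choices of a \emph{nonempty} subset of $C_u$ for each $u\in W$ (and the empty set for $u\notin W$). Since $|C_u|=r$, the generating polynomial (by $|S|$) for one such fiber is $\sum_{k=1}^r\binom{r}{k}x^k=(x+1)^r-1$, and the choices across fibers are independent. Hence the contribution of $W$ is $((x+1)^r-1)^{|W|}$, and summing over dominating sets $W$ of $G$ gives
\[
D(\st{G}{K_r},x)=\sum_{W\text{ dom.\ in }G}\bigl((x+1)^r-1\bigr)^{|W|}=D\bigl(G,(x+1)^r-1\bigr),
\]
which is exactly the claim.

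The argument is essentially bookkeeping once the fiber structure is observed, so there is no real obstacle; the one point worth stating carefully is the neighborhood computation justifying the claim $W(S)$ dominates iff $S$ does, since it is what reduces the problem to counting fiber patterns.
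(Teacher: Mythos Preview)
Your proof is correct and follows essentially the same approach as the paper's. Both arguments identify the closed neighborhood of $(u,v)$ in $\st{G}{K_r}$ as $N_G[u]\times V(K_r)$, deduce that $S$ is dominating iff its projection $W(S)$ onto $V(G)$ is dominating, and then count dominating sets of $\st{G}{K_r}$ by choosing a nonempty subset of each fiber over a dominating set of $G$, yielding the substitution $x\mapsto (x+1)^r-1$; your write-up simply spells out the fiber structure and the neighborhood computation in more detail.
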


\begin{proof}
Let $u$ be a vertex of $G$ and $v \in V(K_r)$; the closed neighborhood of the vertex $(u,v)$ is $(N_G[u],K_r)$.
For any $X \subseteq V(G)$, let $\{ A_x ~|~ x\in X \}$ be a family of arbitrary non-empty subsets of $V(K_r)$.
We then have that such a set $X$ is a dominating set of $G$ if and only if 
\[
\bigcup_{x\in X} \{ (x,v) ~|~ v \in A_x \}
\]
is a dominating set of $\st{G}{K_r}$. Consequently, each vertex $u$ of a dominating set of $G$ corresponds to all
non-empty subsets of $(u,K_r)$ in $\st{G}{K_r}$, which are counted by the generating function $(x+1)^r -1$.
\end{proof}

Theorem \ref{t:spgkr} can be used to generalise recurrence relations for the domination polynomial of any families of graphs, such as for $H_{n,r} := \st{P_n}{K_r}$ as follows:

\begin{corollary}
For any integers $n\geq 3$  and $r\geq 1$ we have
\[
D(H_{n+1,r},x) = ((x+1)^r-1) \left( D(H_{n,r},x) +  D(H_{n-1,r},x) + D(H_{n-2,r},x)  \right)
\]
\end{corollary}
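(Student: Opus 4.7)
The plan is essentially a one-line substitution argument combining Theorem~\ref{t:spgkr} with the path recurrence~\eqref{e:path}. First I would recall that by Theorem~\ref{t:spgkr}, for every $m$ we have the identity
\[
D(H_{m,r},x) \;=\; D(\st{P_m}{K_r},x) \;=\; D(P_m,(x+1)^r-1).
\]
So the goal reduces to showing that the polynomials $D(P_m,(x+1)^r-1)$ satisfy the claimed three-term recurrence with multiplier $(x+1)^r-1$ in place of the usual $x$.

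Next I would take the path recurrence~\eqref{e:path}, which holds as an identity of polynomials in a formal variable $y$:
\[
D(P_{n+1},y) \;=\; y\bigl(D(P_n,y) + D(P_{n-1},y) + D(P_{n-2},y)\bigr),
\]
and specialise by setting $y := (x+1)^r-1$. This immediately yields
\[
D(P_{n+1},(x+1)^r-1) \;=\; ((x+1)^r-1)\bigl(D(P_n,(x+1)^r-1)+D(P_{n-1},(x+1)^r-1)+D(P_{n-2},(x+1)^r-1)\bigr),
\]
and rewriting each term via Theorem~\ref{t:spgkr} gives the corollary.

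The only thing worth checking is the range of $n$: the recurrence~\eqref{e:path} requires the base cases $D(P_0,x),D(P_1,x),D(P_2,x)$ to be defined, so the substituted identity is valid for $n+1\ge 3$, matching the hypothesis $n\ge 3$ (in fact $n\ge 2$ suffices). There is no genuine obstacle here; the work was already absorbed into Theorem~\ref{t:spgkr}, and the same argument plainly generalises: any polynomial recurrence for $D(G_n,x)$ with coefficients that are polynomials in $x$ transports, by the same substitution, to a recurrence for $D(\st{G_n}{K_r},x)$ with $x$ replaced by $(x+1)^r-1$.
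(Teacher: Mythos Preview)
Your proof is correct and follows essentially the same approach as the paper: apply Theorem~\ref{t:spgkr} to write $D(H_{m,r},x)=D(P_m,(x+1)^r-1)$, substitute $y=(x+1)^r-1$ into the path recurrence~\eqref{e:path}, and translate back. The paper's proof is exactly this substitution written out line by line.
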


\begin{proof}
From equation (\ref{e:path})  
and using Theorem \ref{t:spgkr}
we have 
\begin{eqnarray*}
D(H_{n+1,r},x) &=& D(\st{P_{n+1}}{K_r},x) \\
&=& D(P_{n+1},(x+1)^r-1) \\
&=& ((x+1)^r-1) ( D(P_{n},(x+1)^r-1) + D(P_{n-1},(x+1)^r-1) \\ &&~~+D(P_{n-2},(x+1)^r-1) ) \\
&=& ((x+1)^r-1) ( D(H_{n,r},x) + D(H_{n-1,r},x) +D(H_{n-2,r},x) )
\end{eqnarray*}
as required.
\end{proof}

Note that, as shown in \cite{ar:AlikhaniPengCertainII2010}, the same recurrence as equation (\ref{e:path}) holds for the cycle graphs $C_n$ hence there will be an identical generalisation for the domination polynomial of $\st{C_n}{K_r}$.

\begin{corollary}\label{c:hn}
For any integers $n>3$  and $r\geq 1$ we have $D(\st{C_{n+1}}{K_r},x) = $
\[
((x+1)^r-1) \left( D(\st{C_n}{K_r},x) +  D(\st{C_{n-1}}{K_r},x) + D(\st{C_{n-2}}{K_r},x)  \right)
\]
\end{corollary}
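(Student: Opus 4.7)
The plan is to mimic the proof of the preceding corollary almost verbatim, since the only distinction is that we are applying the substitution trick from Theorem \ref{t:spgkr} to the cycle family rather than the path family. The two ingredients are already in place: first, the remark immediately before the statement tells us that $C_n$ satisfies the same three-term recurrence as $P_n$, namely
\[
D(C_{n+1},x) = x\bigl(D(C_{n},x) + D(C_{n-1},x) + D(C_{n-2},x)\bigr)
\]
for $n > 3$ (this is the result attributed to \cite{ar:AlikhaniPengCertainII2010}); second, Theorem \ref{t:spgkr} gives the identity $D(\st{C_m}{K_r},x) = D(C_m,(x+1)^r-1)$ for every $m$.

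First I would rewrite the left-hand side $D(\st{C_{n+1}}{K_r},x)$ via Theorem \ref{t:spgkr} as $D(C_{n+1},(x+1)^r-1)$. Next I would apply the cycle recurrence displayed above with the variable $x$ replaced by $(x+1)^r-1$; this is a legitimate substitution because the recurrence is a polynomial identity in the formal variable. That yields
\[
D(C_{n+1},(x+1)^r-1) = \bigl((x+1)^r-1\bigr)\sum_{j=0}^{2} D(C_{n-j},(x+1)^r-1).
\]
Finally I would use Theorem \ref{t:spgkr} in the reverse direction on each of the three terms in the sum to convert $D(C_{n-j},(x+1)^r-1)$ back into $D(\st{C_{n-j}}{K_r},x)$, yielding the stated recurrence.

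There is essentially no obstacle here beyond bookkeeping: the hypothesis $n > 3$ is inherited from the corresponding bound on the cycle recurrence, and the condition $r \geq 1$ ensures the substitution $y = (x+1)^r - 1$ is nontrivial (though the identity in fact holds for $r=0$ trivially as well). The only point that warrants a brief mention in the write-up is the justification for substituting a polynomial for the indeterminate in a polynomial identity, but this is standard.
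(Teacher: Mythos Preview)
Your proposal is correct and matches the paper's approach exactly: the paper does not even write out a separate proof for this corollary, remarking only that since the cycle recurrence from \cite{ar:AlikhaniPengCertainII2010} coincides with the path recurrence, the argument of the preceding corollary carries over verbatim via Theorem~\ref{t:spgkr}. Your write-up makes that implicit argument explicit, and the bookkeeping remarks about the hypothesis $n>3$ and the substitution are appropriate.
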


\section{The Domination polynomial of $\cp{K_r}{K_s}$}\label{se:kk}

\begin{theorem}\label{t:krk2}
The domination polynomial for $\cp{K_r}{K_2}$ is
\[
(x+1)^{2r} - 2(x+1)^r + 2x^r +1 = ((x+1)^r - 1)^2 +2x^r
\]
\end{theorem}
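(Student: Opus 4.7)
My plan is to enumerate the dominating sets directly. The graph $\cp{K_r}{K_2}$ consists of two vertex-disjoint cliques $\Gr{1}$ and $\Gr{2}$ joined by a perfect matching that pairs each top vertex $(u,v_1)$ with its twin $(u,v_2)$. Any vertex subset $S$ of $\cp{K_r}{K_2}$ decomposes uniquely as $S = A \cup B$ with $A \subseteq \Gr{1}$ and $B \subseteq \Gr{2}$, so I would compute $D(\cp{K_r}{K_2},x)$ by summing $x^{|A|+|B|}$ over all pairs $(A,B)$ for which $A \cup B$ is dominating.

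The key observation driving the enumeration is that each of the two layers is itself a clique: as soon as $A$ is nonempty, every vertex of $\Gr{1}$ is either in $A$ or adjacent to an element of $A$, hence dominated inside its own layer. Consequently the only way the top layer can fail to be dominated internally is when $A = \emptyset$, in which case each unchosen top vertex must be dominated through its unique neighbour in $\Gr{2}$, namely its twin; this forces $B$ to equal the entire bottom layer. The roles of $A$ and $B$ are symmetric throughout.

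I would then split the generating sum into three disjoint cases. If both $A$ and $B$ are nonempty, any such pair is admissible and their combined contribution is $\bigl((x+1)^r-1\bigr)^2$, since $\sum_{\emptyset \ne A \subseteq \Gr{1}} x^{|A|} = (x+1)^r - 1$ in each layer. If $A = \emptyset$ then the only surviving choice is $B = \Gr{2}$, contributing a single monomial $x^r$; by symmetry the case $B = \emptyset$ contributes another $x^r$. Adding these gives $((x+1)^r-1)^2 + 2x^r$, which is the second form in the theorem, and expanding the square as $(x+1)^{2r} - 2(x+1)^r + 1$ recovers the first.

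I expect no real obstacle here: the argument is an essentially one-line case analysis and the only thing to watch is that the three cases are exhaustive and mutually exclusive, which is automatic from partitioning on whether $A$ or $B$ is empty. I would mainly regard this as a template for understanding $\cp{K_r}{K_s}$ more generally, where the "forced completion" behaviour in the degenerate cases becomes considerably more intricate once neither layer is any longer a single clique joined to its counterpart by a matching.
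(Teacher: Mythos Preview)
Your proof is correct and follows essentially the same argument as the paper's: both rest on the observation that a subset of $\cp{K_r}{K_2}$ is dominating if and only if either both layers are nonempty or one layer is taken in full. The only cosmetic difference is that the paper reaches the ``both layers nonempty'' contribution by subtracting the sets with an empty layer from the total $(x+1)^{2r}$, whereas you compute it directly as $\bigl((x+1)^r-1\bigr)^2$; the paper then adds back the two full single layers $2x^r$ exactly as you do.
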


\begin{proof}
Let us again suppose that $V(K_2):=\{v_1, v_2\}$ in the product $\cp{K_r}{K_2}$.
All  sets which contain only vertices from some $\Gr{j}$ are generated by $2((x+1)^r-1) + 1$ where the first term utilises the
non-empty sets and the last term comes from the empty set. Amongst these sets, only those which contain all $r$ vertices from $\Gk{K_r}{v_j}$
(for $j=1$ or 2) are dominating in $\cp{K_r}{K_2}$, giving the term $2x^r$. 
All vertex subsets of $\cp{K_r}{K_2}$ are counted by $(x+1)^{2r}$, so the domination polynomial for it  is 
\[
(x+1)^{2r} - (2(x+1)^r - 1) + 2x^r = ((x+1)^r - 1)^2 +2x^r \qedhere
\]
\end{proof}

Note that Theorem \ref{t:krk2} can also be deduced from Theorem \ref{t:GK2}, although it is a more involved calculation, even after using the symmetry inherent when $G = K_r$. Theorem \ref{t:krk2} can be generalised in the following way:

\begin{theorem}\label{t:krks}
The domination polynomial for $\cp{K_r}{K_s}$ is
\[
D(\cp{K_r}{K_s},x) = \left( (x+1)^r -1 \right)^s - \sum_{k=1}^{s-1} {s \choose k} (-1)^k  \left( (x+1)^{s-k} -1 \right)^r 
\]
\end{theorem}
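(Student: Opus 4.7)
I will view the vertex set of $\cp{K_r}{K_s}$ as the grid $[r]\times[s]$, where $(i,j)$ is adjacent to $(i',j')$ iff they share a coordinate. Then a single vertex $(i,j)$ dominates exactly its row and its column, so a set $S\subseteq[r]\times[s]$ is dominating iff for every $(i,j)$ either some vertex of $S$ lies in row $i$ or some vertex of $S$ lies in column $j$. Letting $R(S)\subseteq[r]$ be the set of rows and $C(S)\subseteq[s]$ the set of columns occupied by $S$, this condition rearranges cleanly to: $R(S)=[r]$ or $C(S)=[s]$. This is the key combinatorial observation; I expect the rest to be inclusion–exclusion bookkeeping.

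Write $A=\{S:R(S)=[r]\}$ and $B=\{S:C(S)=[s]\}$, and let $D_A,D_B,D_{A\cap B}$ be the associated size generating functions so that
\[
D(\cp{K_r}{K_s},x)=D_A+D_B-D_{A\cap B}.
\]
The polynomials $D_A$ and $D_B$ factor row-by-row (resp.\ column-by-column) since the rows of $[r]\times[s]$ are disjoint: each row of $S$ must be a non-empty subset of the $s$ vertices of that row, contributing $(x+1)^s-1$, and the rows are independent, giving $D_A=((x+1)^s-1)^r$, and symmetrically $D_B=((x+1)^r-1)^s$.

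For $D_{A\cap B}$ the rows and columns are no longer independent, so I would apply inclusion–exclusion over the columns while keeping the ``every row is used'' constraint. For $T\subseteq[s]$, the sets in $A$ that avoid all columns in $T$ are exactly subsets of $[r]\times([s]\setminus T)$ with every row non-empty, whose generating function is $((x+1)^{s-|T|}-1)^r$. Hence
\[
D_{A\cap B}=\sum_{k=0}^{s}\binom{s}{k}(-1)^{k}\bigl((x+1)^{s-k}-1\bigr)^{r}.
\]

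Finally I would substitute into $D_A+D_B-D_{A\cap B}$ and notice that the $k=0$ term of the sum is exactly $D_A$, cancelling it, while the $k=s$ term is $(-1)^s\cdot 0^r=0$ since $r\ge 1$. What remains is $((x+1)^r-1)^s-\sum_{k=1}^{s-1}\binom{s}{k}(-1)^k((x+1)^{s-k}-1)^r$, which is the claimed identity. The only genuinely non-routine step is recognizing the dominating-set characterization in terms of $R(S)$ and $C(S)$; everything else is inclusion–exclusion together with the factorization $((x+1)^m-1)^n=\sum_k\binom{n}{k}(-1)^k(x+1)^{m(n-k)}$ used implicitly when matching the sum against $D_A$.
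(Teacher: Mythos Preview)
Your proof is correct and follows essentially the same route as the paper: both arguments identify the dominating sets of $\cp{K_r}{K_s}$ as those subsets that occupy every row or every column of the $r\times s$ grid, and both finish by inclusion--exclusion on the set of empty columns. Your organization via $D_A+D_B-D_{A\cap B}$ is a bit more explicit than the paper's (which computes $D_B$ plus the sets with at least one empty column and every row occupied), but the underlying computation is identical.
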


\begin{proof}
We can imagine the vertices of $\cp{K_r}{K_s}$ as elements of an $r\times s$ matrix; for a dominating set in this graph we need
to have at least one element in every row and column. The simplest way this can be achieved is to have at least one vertex in every column and the ordinary generating function that generates such sets is $\left( (x+1)^r -1 \right)^s$.
However, it is also possible to have empty sets in some columns, so long as each row contains at least one element:

There are $s$ choices for the case of one empty column and, given that choice, the generating function counting non-empty rows of $s-1$ elements is $\left( (x+1)^{s-1} -1 \right)^r$. However, some of the sets counted in this way will have more than one empty column; by the principle of inclusion-exclusion, we now need to subtract the $s \choose 2$ ways to choose a pair of columns to be empty.

The polynomial counting dominating sets with at least two columns empty is 
\[
\left( (x+1)^{s-2} -1 \right)^r
\]
 but this then includes sets with more than two empty columns and so the inclusion-exclusion process will continue.
The final case will be when we have all but one column empty, in which case the only possible dominating set contains all $r$ vertices from one column. The term counting all such sets will be $sx^r = {s \choose s-1} ((x+1) -1)^r$, which matches the term in the sum in the theorem when $k=s-1$. Combining all of these cases together completes the proof.
\end{proof}

\begin{corollary}\label{t:krk3}
The domination polynomial for $\cp{K_r}{K_3}$ is
\[
((x+1)^r -1)^3+3x^r((x+2)^r-1)
\]
\end{corollary}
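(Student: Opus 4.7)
The plan is to derive Corollary \ref{t:krk3} as a direct specialization of Theorem \ref{t:krks} with $s=3$, followed by an elementary algebraic simplification. Since Theorem \ref{t:krks} is already proven, essentially all the combinatorial work is done; the remaining task is to recognize that the inclusion-exclusion sum collapses to a particularly clean form in the case $s = 3$.

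Concretely, I would set $s=3$ in the general formula, giving
\[
D(\cp{K_r}{K_3},x) = ((x+1)^r - 1)^3 - \binom{3}{1}(-1)^1 \bigl((x+1)^2 - 1\bigr)^r - \binom{3}{2}(-1)^2 \bigl((x+1) - 1\bigr)^r.
\]
This simplifies to
\[
((x+1)^r - 1)^3 + 3\bigl((x+1)^2 - 1\bigr)^r - 3x^r.
\]
The key algebraic observation is that $(x+1)^2 - 1 = x(x+2)$, hence $\bigl((x+1)^2 - 1\bigr)^r = x^r(x+2)^r$. Factoring $3x^r$ from the last two terms yields $3x^r\bigl((x+2)^r - 1\bigr)$, which matches the statement of the corollary.

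The only potential obstacle is bookkeeping the signs in the alternating sum correctly (and remembering that the binomial coefficient $\binom{3}{2}=3$ matches the coefficient of the trailing $x^r$ term), but this is purely routine. No further combinatorial argument is needed beyond invoking Theorem \ref{t:krks}.
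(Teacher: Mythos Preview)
Your proposal is correct and essentially identical to the paper's own proof: substitute $s=3$ into Theorem \ref{t:krks}, observe that $(x+1)^2-1=x(x+2)$, and factor $3x^r$ from the remaining two terms.
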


\begin{proof}
Substituting $s=3$ into Theorem \ref{t:krks} we get
\begin{eqnarray*}
D(\cp{K_r}{K_3},x) &=& \left( (x+1)^r -1 \right)^3 - \sum_{k=1}^{2} {3 \choose k} (-1)^k  \left( (x+1)^{3-k} -1 \right)^r \\
&=& \left( (x+1)^r -1 \right)^3 + 3\left(  \left( (x+1)^2 -1 \right)^r -  \left( (x+1) -1 \right)^r \right) \\
&=& \left( (x+1)^r -1 \right)^3 + 3\left(  \left( (x(x+2) \right)^r -  x^r \right) \\
&=& \left( (x+1)^r -1 \right)^3 + 3x^r\left( (x+2)^r -  1 \right) \qedhere
\end{eqnarray*}
\end{proof}

\section{The Domination Polynomial for $\cp{P_n}{K_2}$}  \label{se:pk2}

Let $L_n$ be the graph $\cp{P_n}{K_2}$
and label the vertices of the
two copies of $P_n$ as $u_1, \ldots, u_n$ and $v_1, \ldots, v_n$.
Note that the graph $L_{n-1}$ is formed
from $L_n$ by deletion of $u_n$ and $v_n$. The domination polynomials of the first six graphs in the family are given in Table \ref{tab:ln}.

\begin{center}
\begin{table}
\caption{The domination polynomials for the graphs $\cp{P_n}{K_2}$}\label{tab:ln}
~~~~~~~$
\begin{array}{c|c}
n & D(\cp{P_n}{K_2},x) \\\hline
1 
& 
{x}^{2}+2\,x
\\
2 
&
{x}^{4}+4\,{x}^{3}+6\,{x}^{2}
\\
3
&
{x}^{6}+6\,{x}^{5}+15\,{x}^{4}+16\,{x}^{3}+3\,{x}^{2}
\\
4
&
{x}^{8}+8\,{x}^{7}+28\,{x}^{6}+52\,{x}^{5}+48\,{x}^{4}+12\,{x}^{3}
\\
5
&
{x}^{10}+10\,{x}^{9}+45\,{x}^{8}+116\,{x}^{7}+178\,{x}^{6}+148\,{x}^{5}+47\,{x}^{4}+2\,{x}^{3}
\\
6
&
{x}^{12}+12\,{x}^{11}+66\,{x}^{10}+216\,{x}^{9}+453\,{x}^{8}+604\,{x}^{7}+470\,{x}^{6}+168\,{x}^{5}+17\,{x}^{4}\\\hline
\end{array}
$
\end{table}
\end{center}

We first prove a small result which will be used in the main theorem of this section.

\begin{lemma}\label{l:both}
The polynomial $A_{n} (x)$ counting the dominating sets of $L_n$ such
that both $u_n$ and $v_n$ are included satisfies
the following:
\[
A_n (x) := x^2 \left( D(L_{n-1},x) + D(L_{n-2},x) - A_{n-2}(x) \right)
\]
\end{lemma}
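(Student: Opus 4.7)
The plan is to rewrite $A_n(x)$ as $x^2$ times a simpler generating function over subsets of $V(L_{n-1})$ and then analyze that generating function by inclusion--exclusion on which of $u_{n-1}$ and $v_{n-1}$ fail to be dominated.

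First, observe that if $S$ is a dominating set of $L_n$ containing both $u_n$ and $v_n$, then these two vertices already dominate themselves along with their only remaining neighbors $u_{n-1}$ and $v_{n-1}$. So $S':=S\setminus\{u_n,v_n\}$ is constrained only to dominate the vertices of $V(L_{n-2})$, and conversely every such $S'$ together with $\{u_n,v_n\}$ yields a set counted by $A_n(x)$. Writing $F_{n-1}(x)$ for the generating function of those $S'\subseteq V(L_{n-1})$ which dominate $V(L_{n-2})$ in $L_{n-1}$, we have $A_n(x)=x^2 F_{n-1}(x)$, and the lemma reduces to the identity $F_{n-1}(x)=D(L_{n-1},x)+D(L_{n-2},x)-A_{n-2}(x)$.

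Next, I would split the sets enumerated by $F_{n-1}(x)$ according to whether they fully dominate $L_{n-1}$. Those that do contribute exactly $D(L_{n-1},x)$. The rest fail to dominate $u_{n-1}$ or $v_{n-1}$. For an $S'$ failing to dominate $u_{n-1}$, the three vertices $u_{n-1}$, $u_{n-2}$, $v_{n-1}$ are all forced out of $S'$. Since $u_{n-1},v_{n-1}\notin S'$, the closed neighborhood in $L_{n-1}$ of each vertex of $V(L_{n-2})$ intersects $S'$ in exactly the same set it would in $L_{n-2}$, so domination of $V(L_{n-2})$ by $S'$ in $L_{n-1}$ is equivalent to $S'$ being a dominating set of $L_{n-2}$ that avoids $u_{n-2}$. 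Symmetrically, sets failing to dominate $v_{n-1}$ correspond to dominating sets of $L_{n-2}$ that avoid $v_{n-2}$, and sets failing to dominate both $u_{n-1}$ and $v_{n-1}$ correspond to dominating sets of $L_{n-2}$ that avoid both $u_{n-2}$ and $v_{n-2}$.

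Finally, inclusion--exclusion shows that the sets failing to dominate $u_{n-1}$ or $v_{n-1}$ are in bijection with the dominating sets of $L_{n-2}$ that do not contain both $u_{n-2}$ and $v_{n-2}$, whose generating function is precisely $D(L_{n-2},x)-A_{n-2}(x)$. Adding the $D(L_{n-1},x)$ contribution and multiplying by $x^2$ delivers the stated recurrence. I expect the main care to be needed in the bijection step: verifying that once $u_{n-1}$ and $v_{n-1}$ are forced outside $S'$, the domination requirements on $V(L_{n-2})$ in $L_{n-1}$ and in $L_{n-2}$ truly coincide, which in turn is what underwrites the symmetric treatment of $\mathcal U$, $\mathcal V$, and $\mathcal U\cap\mathcal V$.
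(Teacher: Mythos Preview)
Your argument is correct and follows essentially the same idea as the paper: both reduce $A_n(x)/x^2$ to counting subsets $S'\subseteq V(L_{n-1})$ that dominate $V(L_{n-2})$, and both identify these as the dominating sets of $L_{n-1}$ together with the dominating sets of $L_{n-2}$, with overlap $A_{n-2}(x)$. The paper reaches this union--overlap description in one step, whereas you take a slightly longer route through a secondary inclusion--exclusion on which of $u_{n-1},v_{n-1}$ is undominated; the net content is the same.
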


\begin{proof}
Every dominating set for either $L_{n-1}$ or $L_{n-2}$ will be a dominating set for $L_n$ when combined with $u_n$ and $v_n$
since these two vertices dominate themselves and their neighbours. Any set $S$ which is a dominating set in both 
$L_{n-1}$ and $L_{n-2}$ cannot contain either $u_{n-1}$ or $v_{n-1}$ since they are not in $L_{n-2}$ and 
hence $S$ must contain both $u_{n-2}$ and $v_{n-2}$ in order for the former pair of vertices to be dominated.
Thus exactly $x^2 A_{n-2} (x)$ sets  are counted twice and this is subtracted to give our result.
\end{proof}

\begin{theorem}\label{t:lad}
The dominating polynomial for $L_n$ satisfies the recurrence:
\begin{eqnarray*}
D(L_n ,x) &=& x(x+2) D(L_{n-1},x) 
+  x(x+1) D(L_{n-2},x)  \\
&&+  x^2(x+1) D(L_{n-3},x) 
-  x^3 D(L_{n-4},x) 
-  x^3 D(L_{n-5},x) 
\end{eqnarray*}
\end{theorem}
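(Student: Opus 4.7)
The plan is to partition dominating sets of $L_n$ by the state of column $n$, derive an auxiliary recurrence for the "off-diagonal" piece alongside Lemma~\ref{l:both}, and then algebraically eliminate $A_n$.

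Write $D(L_n,x) = A_n + 2 F_n + C_n$, where $A_n$ is the polynomial of Lemma~\ref{l:both} (corresponding to $\{u_n,v_n\}\subseteq S$), $F_n$ counts dominating sets with $u_n\in S$ and $v_n\notin S$, and $C_n$ counts those with $\{u_n,v_n\}\cap S=\emptyset$; the factor $2$ absorbs the symmetric case $v_n\in S$, $u_n\notin S$. An immediate argument gives $C_n = A_{n-1}$, since the absence of $u_n$ and $v_n$ forces $u_{n-1},v_{n-1}\in S$ and then $S$ is any dominating set of $L_{n-1}$ containing both.

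The combinatorial heart is a recurrence for $F_n$ obtained by splitting on $v_{n-1}$. If $v_{n-1}\in S$, then $S\setminus\{u_n\}$ is any dominating set of $L_{n-1}$ containing $v_{n-1}$, contributing $xV_{n-1}$, where $V_m$ denotes the polynomial of dominating sets of $L_m$ containing $u_m$ (equivalently $v_m$, by the symmetry swapping the two paths). If $v_{n-1}\notin S$ then $v_{n-1}$ must be dominated by $u_{n-1}$ or by $v_{n-2}$; the subcase $u_{n-1}\in S$ yields an $F_{n-1}$-set (the remaining constraints being automatic), while the disjoint subcase $u_{n-1}\notin S$, $v_{n-2}\in S$ forces $S\cap V(L_{n-2})$ to be a dominating set of $L_{n-2}$ containing $v_{n-2}$. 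Using $V_m = A_m + F_m$, the three contributions combine into
\[
F_n = x A_{n-1} + 2x F_{n-1} + x A_{n-2} + x F_{n-2}.
\]

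Substituting $2F_m = D(L_m,x) - A_m - A_{m-1}$ (from the initial partition) to eliminate $F$ produces
\[
D(L_n,x) = A_n + A_{n-1} + 2x\,D(L_{n-1},x) + x\,D(L_{n-2},x) - x(A_{n-2}+A_{n-3}).
\]
Lemma~\ref{l:both} at indices $n$ and $n-1$ rewrites $A_n + A_{n-1}$ in terms of $D$'s and $(A_{n-2}+A_{n-3})$, collapsing everything into a single unknown $x(x+1)(A_{n-2}+A_{n-3})$. To eliminate this quantity, sum Lemma~\ref{l:both} at indices $n-2$ and $n-3$ to express $A_{n-2}+A_{n-3}$ in terms of $D$'s and $(A_{n-4}+A_{n-5})$, and apply the preceding displayed identity shifted by two (which already writes $x(x+1)(A_{n-4}+A_{n-5})$ purely in terms of $D_{n-2},\ldots,D_{n-5}$). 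The $A$-terms cancel and the arithmetic collapses to the stated five-term recurrence.

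The conceptual steps are straightforward; the main obstacle is the algebraic bookkeeping in this final two-stage elimination, where one must match the $x$-factors coming from the $F$-recurrence against the $x^2$-factors produced by Lemma~\ref{l:both} so that the $A_{n-4}+A_{n-5}$ contributions cancel exactly.
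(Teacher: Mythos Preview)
Your proof is correct and takes a genuinely different route from the paper. The paper argues by a deep direct case analysis on the last few columns: after partitioning on $T\cap\{u_n,v_n\}$ it pushes further into columns $n-1,\ldots,n-5$, identifying by hand each family of dominating sets responsible for each term in the final recurrence (introducing an ad hoc polynomial $J(x)$ along the way that ultimately cancels), with Lemma~\ref{l:both} invoked only to package the remaining $A$-terms. Your approach is more algebraic and systematic: you introduce the auxiliary quantity $F_n$, derive a clean one-step recurrence $F_n = xA_{n-1}+2xF_{n-1}+xA_{n-2}+xF_{n-2}$ for it, and then eliminate $F$ and $A$ symbolically. This is essentially a transfer-matrix argument and generalizes more readily to other strip graphs, at the cost of obscuring which dominating sets produce which monomials; the paper's method makes those bijections explicit but relies on bespoke bookkeeping that does not obviously scale. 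One small point of exposition: your parenthetical ``which already writes $x(x+1)(A_{n-4}+A_{n-5})$ purely in terms of $D_{n-2},\ldots,D_{n-5}$'' must refer to the post-Lemma identity (the one containing the single unknown $x(x+1)(A_{n-2}+A_{n-3})$) shifted by two, not the displayed pre-Lemma one; saying so explicitly would make the final elimination unambiguous, though the algebra checks out either way.
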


\begin{proof}
Let $T$ be a dominating set for $L_n$ and set
$T_1:= T / \{u_n, v_n\}$.  If $T_1 = T$ then (in order to have $u_n$
and $v_n$ dominated) we can 
conclude that $|T \cap \{ u_{n-1}, v_{n-1} \} | = 2$ and the polynomial counting such sets will be $A_{n-1} (x)$  as in Lemma 
\ref {l:both}.
This gives us the contribution $x^2 \left(  D(L_{n-2},x) +  D(L_{n-3},x) - A_{n-3}(x) \right)$ for our summation.

Now suppose $|T \cap \{u_n,v_n\}|\geq 1$; if $T_1$ is a 
dominating set for $L_{n-1}$ then $T$ will be a dominating set for
$L_n$.  Thus we get the term $x(x+2)D(L_{n-1},x)$, 
the $x(x+2)$  coming from that we can use $u_n$ and/or $v_n$ 
with $T_1$ to form a dominating set.

However, there are circumstances under which $T_1$ 
does not have to be a dominating set for $L_{n-1}$, since 
$u_{n-1}$ and $v_{n-1}$ in $L_{n-1}$ 
might be only dominated by $u_n$ or $v_n$ in $T$.
Let us now consider the ways that exist such that $u_{n-1}$ and $v_{n-1}$ can be 
not dominated in $T_1$ but dominated in $T$.

If both $u_{n-1}$ and $v_{n-1}$ are undominated by $T_1$
then we must have $|T \cap \{u_n,v_n\}|=2$ to dominate those vertices and 
also $|T \cap \{u_{n-3},v_{n-3}\}|=2$ to dominate $u_{n-2}$ and $v_{n-2}$, 
giving the term $x^2 A_{n-3} (x)$ which will cancel that term introduced at the start of the 
proof.

We are now left to count just the dominating sets for $L_{n-2}$ which 
include only one of $u_{n-2}$ and $v_{n-2}$. These sets will make a previously uncounted
dominating set for $L_n$ when combined with $v_n$ and/or $u_n$ respectively.
These are the four different possibilities, defining $S := T \cap \{u_n,v_n,u_{n-1},v_{n-1},u_{n-2},v_{n-2}\}$:

\begin{enumerate}
\renewcommand{\labelenumi}{(\roman{enumi})}
\item $S=\{u_n,v_n,v_{n-2}\}$
\item $S=\{u_n,v_{n-2}\}$ 
\item $S=\{v_n,u_{n-2}\}$
\item $S=\{u_n,v_n,u_{n-2}\}$
\end{enumerate}

To count these possibilities we can now consider the different 
ways that exactly one of $u_n$ or $v_n$ can be combined
with a dominating set for $L_{n-2}$ which will lead to the contribution
of the term $xD(L_{n-2},x)$ to our sum. 
Suppose $Q$ is a dominating set for $L_{n-2}$; we will split into
subcases depending on $r:= | Q\cap \{u_{n-2} , v_{n-2} \} |$ as follows:
 
Every set $Q$ satisfying $r=2$ can be converted into
a set of the type of possibility (i) (by adding $u_n$ and switching 
$v_n$
for $u_{n-2}$), but this new set will not be a dominating set for $L_n$ when
$u_{n-3}$ is solely dominated by $u_{n-2}$ in $Q$; that is when 
$ Q \cap \{u_{n-3},v_{n-3}, u_{n-4}\} = \varnothing$. Let the sets of this form which have $v_{n-4} \in Q$ be counted by the 
polynomial $J(x)$ and such sets which also do not include $v_{n-4}$ are necessarily $x^2 A_{n-5}(x)$ as in Lemma \ref{l:both}.

When $r=1$ we can add $u_n$ or $v_n$ as appropriate and have possibilities (ii) and (iii) for $S$.   
In the case when $r=0$, $Q$
must include both $u_{n-3}$ and $v_{n-3}$ to be dominating. No such set can be
combined with just one more vertex to make a dominating set for $L_n$, and we can count
the sets with $r=0$ (and one additional unspecified vertex) using the polynomial $x A_{n-3}(x)$.
Putting these terms together, we see that possibilities (i),(ii) and (iii) are
counted by 
\[
x(D(L_{n-2},x) -  J(x) - x^2 A_{n-5} -  A_{n-3}(x)).
\]

Finally, we can count the dominating sets for $L_n$ with $S$ as in possibility (iv)
by using $x^3 D(L_{n-3},x) + x J(x)$. We make a slight
adjustment in the same way as in the subcase when $r=0$ since
a set in which only $u_{n-3}$ is not dominated in $L_{n-3}$ 
will still be a dominating set in $L_n$ when combined with this $S$, and 
the polynomial counting such sets exactly matches the definition of $xJ(x)$.

Using Lemma \ref{l:both} again, we get that 

\[
  x^3 A_{n-5}(x)  +  x A_{n-3}(x)= x^3 \left( D(L_{n-4},x) + D(L_{n-5},x)  \right).
\]

and so, summing all of our terms together, we can count all possible dominating sets $T$ for $L_n$ by using the polynomial
in the statement in the theorem.
\end{proof}

Note that at no point did we either concern ourselves with the structure beyond
$u_{n-5}$ and $v_{n-5}$ or utilise the symmetry of $\cp{P_n}{K_2}$, and hence this same recurrence also holds for 
any family of graphs with $\cp{P_6}{K_2}$ as a pendant subgraph.

We can again use Theorem \ref{t:spgkr}  as in Corollary \ref{c:hn} 
to find the domination polynomial for the strong product 
$Z_{n,r} := \st{L_n}{K_r}$:

\begin{corollary}
For any integers $n\geq 6$  and $r\geq 1$ we have
\begin{eqnarray*}
D(Z_{n,r} ,x) &=&  \left( (x+1)^{2r} -1 \right) D(Z_{n-1,r},x) \\
&&+  ( (x+1)^r -1 ) (x+1)^r  D(Z_{n-2,r},x)  \\
&&+  ((x+1)^r -1)^2 (x+1)^r D(Z_{n-3,r},x) \\
&& -  ((x+1)^r -1)^3 (D(Z_{n-4,r},x) + D(Z_{n-5,r},x) )
\end{eqnarray*}
\end{corollary}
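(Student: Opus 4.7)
The plan is to mirror exactly the argument used for Corollary \ref{c:hn}: invoke Theorem \ref{t:spgkr} to convert the recurrence for $D(L_n,x)$ from Theorem \ref{t:lad} into a recurrence for $D(Z_{n,r},x) = D(\st{L_n}{K_r},x)$ by substituting $y := (x+1)^r - 1$ into the variable. Since Theorem \ref{t:spgkr} gives
\[
D(Z_{n,r},x) = D(L_n,(x+1)^r - 1),
\]
applying this identity to every term in the Theorem \ref{t:lad} recurrence immediately yields
\[
D(Z_{n,r},x) = y(y+2) D(Z_{n-1,r},x) + y(y+1) D(Z_{n-2,r},x) + y^2(y+1) D(Z_{n-3,r},x) - y^3 D(Z_{n-4,r},x) - y^3 D(Z_{n-5,r},x)
\]
with $y = (x+1)^r - 1$.

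Next I would verify that each coefficient polynomial in $y$ matches the coefficient claimed in the statement. This is routine: $y(y+2) = ((x+1)^r-1)((x+1)^r+1) = (x+1)^{2r}-1$; $y(y+1) = ((x+1)^r - 1)(x+1)^r$; $y^2(y+1) = ((x+1)^r-1)^2 (x+1)^r$; and $y^3 = ((x+1)^r-1)^3$, matching the four coefficients appearing in the corollary (with the final minus sign distributed across both $D(Z_{n-4,r},x)$ and $D(Z_{n-5,r},x)$).

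Finally I would note the hypothesis $n \geq 6$ is exactly what is needed so that $L_{n-5}$ (and hence $Z_{n-5,r}$) is defined and Theorem \ref{t:lad} applies. There is no real obstacle here; the whole content of the corollary is that substitution of $y = (x+1)^r - 1$ commutes with taking the strong product with $K_r$, which is precisely Theorem \ref{t:spgkr}. The only step requiring any care is the algebraic identification of the coefficients, and this is a one-line check for each.
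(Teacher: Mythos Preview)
Your proposal is correct and follows essentially the same approach as the paper: substitute $y=(x+1)^r-1$ into the recurrence of Theorem~\ref{t:lad} via Theorem~\ref{t:spgkr}, then simplify using $y+1=(x+1)^r$. Your explicit verification of each coefficient (in particular $y(y+2)=(x+1)^{2r}-1$) is slightly more detailed than the paper's write-up, but the argument is the same.
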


\begin{proof}
Let us substitute $y:=(x+1)^r -1$ to simplify calculations:
\begin{eqnarray*}
D(Z_{n,r} ,x) &=& D(L_n,(x+1)^r-1)  \\
&=& D(L_n,y)  \\
&=& y(y+2) D(L_{n-1},y) 
+  y(y+1) D(L_{n-2},y)  \\
&&+  y^2(y+1) D(L_{n-3},y) 
-  y^3 D(L_{n-4},y) 
-  y^3 D(L_{n-5},y)\\ 
&=&  y(y+2) D(Z_{n-1,r},x) 
+  y(y+1) D(Z_{n-2,r},x)  \\
&&+  y^2(y+1) D(Z_{n-3,r},x) 
-  y^3 D(Z_{n-4,r},x) 
-  y^3 D(Z_{n-5,r},x)
\end{eqnarray*}
Utilising now that $y+1:=(x+1)^r$ 
we get the desired result.
\end{proof}

\section{The Domination Polynomial of $\cp{P_n}{K_r}$}\label{se:pnkr}

We denote by $M_{n,r} := \cp{P_n}{K_r}$ the Cartesian product
of the path $P_n$ and the complete graph $K_r$, where $n$ and $r$
are non-negative integers. We will utilise the linear structure of $P_n$ and 
refer to the copy of $K_r$ corresponding to a vertex of degree one 
in $P_n$ as at the {\em left end} and the copy of $K_r$ adjacent to it as the {\em second one}.
Let $m^t_{n,r}(x)$ be the domination
polynomial of $M_{n,r}$ under the condition that exactly $t$ of the
$r$ vertices of the left end $K_r$  do not necessarily need to be dominated.

Let $\delta_{t,r}:= [t=r]$ denote the Kronecker delta function.
For $n=0$  and $n=1$ the graph $M_{n,r}$ is the null graph and $K_r$ respectively and so only the case of
the empty dominating set needs to be considered carefully. For $n=2$ the case $t=0$ and $r>0$ corresponds to
Theorem \ref{t:krk2} and that proof can be generalised to give the result here.
\begin{eqnarray}
m^t_{0,r}(x) &=& 1 \label{e:pnkr} \nonumber\\
m^t_{1,r}(x) &=& (x+1)^r - 1 + \delta_{t,r}\\\nonumber
m^t_{2,r}(x) &=&  
(x+1)^{2r} - 2(x+1)^r + x^r + 1 + x^{(r-t)}(x+1)^t - \delta_{t,r}
\end{eqnarray}

From these equations we can establish the following recurrence relations for $m^t_{n,r}$ in general

\begin{theorem}\label{t:PKr}
The dominating polynomial for 
$\cp{P_n}{K_r}$ satisfies
\[
D(\cp{P_n}{K_r},x) = D( M_{n,r} ,x ) = m^0_{n,r}(x) = \sum_{t=0}^r {r \choose t} x^t m^t_{n-1,r}(x) 
\]
where $m^t_{n-1,r}(x)$ can be evaluated recursively using recurrence relations.
\end{theorem}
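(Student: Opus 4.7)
The first equality $D(M_{n,r},x)=m^{0}_{n,r}(x)$ is immediate from the definition: when $t=0$ no vertex of the left end is exempt from needing to be dominated, so $m^{0}_{n,r}(x)$ enumerates precisely the dominating sets of $M_{n,r}$.

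For the main identity the plan is to decompose each dominating set $S$ of $M_{n,r}$ according to its intersection $A:=S\cap L$ with the leftmost copy $L$ of $K_{r}$. Writing $t:=|A|$, the vertex-transitive action of $\mathrm{Sym}(L)$ extends to an automorphism of $M_{n,r}$ that fixes the remaining $M_{n-1,r}$ pointwise, so the number of completions of $A$ to a dominating set depends only on $t$; this accounts for the factor $\binom{r}{t}$ in front, while the vertices of $A$ themselves contribute $x^{t}$.

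The central step is to show that these completions are counted by $m^{t}_{n-1,r}(x)$. When $t\ge 1$, completeness of $K_{r}$ implies that $A$ already dominates all of $L$, and the Cartesian-product edges joining $L$ to the adjacent copy $L'$---which is the left end of $M_{n-1,r}$---mean that $A$ simultaneously dominates the $t$ vertices of $L'$ directly opposite $A$. The residual set $S\setminus A\subseteq V(M_{n-1,r})$ is then an arbitrary subset that dominates every vertex of $M_{n-1,r}$ except possibly those $t$ specified vertices of its left end, which is exactly the condition defining $m^{t}_{n-1,r}(x)$.

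The subtlest case is $t=0$: with $A=\varnothing$ each vertex of $L$ has only its unique neighbor in $L'$ available as a dominator, forcing $L'\subseteq S$, which then dominates the third copy $L''$ and reduces the remaining count onto $M_{n-2,r}$ with $L''$ already covered. I expect the main technical obstacle to be reconciling this rigid contribution with the nominal term $\binom{r}{0}m^{0}_{n-1,r}(x)$ of the displayed sum, which I plan to handle by applying the recurrence one index lower so that the $t=r$ summand inside $m^{0}_{n-1,r}(x)$ supplies the required $x^{r}m^{r}_{n-2,r}(x)$ piece. Once this identity is in place, the final assertion that each $m^{t}_{n-1,r}(x)$ is recursively computable follows by running the analogous decomposition for each $m^{t}_{n,r}$ with $t\ge 1$, yielding an $(r+1)$-dimensional coupled linear recurrence whose base cases are supplied by~(\ref{e:pnkr}).
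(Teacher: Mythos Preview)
Your decomposition by the intersection $A=S\cap L$ with the leftmost copy of $K_r$ is exactly the paper's approach, and your treatment of the cases $t\ge 1$ is correct: choosing $|A|=t$ vertices in $L$ dominates all of $L$ and exactly $t$ matched vertices of $L'$, leaving a residual counted by $m^{t}_{n-1,r}(x)$. You are also right that when $t=0$ the set $L'$ is forced, so the contribution is $x^{r}m^{r}_{n-2,r}(x)$.

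The gap is your reconciliation of this $t=0$ contribution with the displayed term $\binom{r}{0}m^{0}_{n-1,r}(x)$. Your plan is to expand $m^{0}_{n-1,r}(x)$ via the same recurrence one level down and match its $t=r$ summand $x^{r}m^{r}_{n-2,r}(x)$; but the remaining summands $\sum_{t<r}\binom{r}{t}x^{t}m^{t}_{n-2,r}(x)$ in that expansion do not vanish, so $m^{0}_{n-1,r}(x)\ne x^{r}m^{r}_{n-2,r}(x)$ in general. A direct check at $n=r=2$ already shows the mismatch: the displayed sum evaluates to $x^{4}+4x^{3}+6x^{2}+2x$, whereas $D(L_{2},x)=x^{4}+4x^{3}+6x^{2}$. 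In fact what you have correctly derived,
\[
m^{0}_{n,r}(x)=\sum_{t=1}^{r}\binom{r}{t}x^{t}m^{t}_{n-1,r}(x)+x^{r}m^{r}_{n-2,r}(x),
\]
is precisely the $t=0$ instance of the paper's recurrence~(\ref{e:mtnr}); the paper's proof establishes~(\ref{e:mtnr}) together with the companion formula $m^{r}_{n,r}(x)=\sum_{i=0}^{r}\binom{r}{i}x^{i}m^{i}_{n-1,r}(x)$, not the literally displayed identity, whose $t=0$ term appears to be a misprint (note that the displayed sum is exactly the formula for $m^{r}_{n,r}$, not $m^{0}_{n,r}$). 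So your argument is correct for what the paper actually proves; only the attempt to force agreement with the stated $t=0$ term is doomed.
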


\begin{proof}
Suppose we have a set $S$ from $M_{n-1,r}$ which dominates all vertices in $M_{n-2,r}$ and $t$ vertices in the left end.
We first suppose that $t<r$ so that at least one vertex is undominated.

 In order to form a dominating set for $M_{n,r}$, a non-empty vertex subset of the left end $K_r$ must be added to $S$. There are $r\choose t$ ways to choose $t$ vertices and the contribution to the domination polynomial is a factor of $x^t$ in each case. These $t$ vertices will then cover $t$ vertices of the second $K_r$, so we get $m^t_{n-1,r}$ as the polynomial counting the possibilities for $S$.

It is also possible that all vertices of the left end are dominated by a set which doesn't include any vertices from its $K_r$.
In this case all $r-t$ vertices in the second $K_r$ which are adjacent to the undominated vertices of the left end must be 
added to $S$, giving a
factor of $x^{r-t}$ for the domination polynomial. This then allows the choice of an arbitrary subset of the remaining 
$t$ vertices of the second $K_r$ to make up $S$ which means that some vertices in the third $K_r$ don't need to be dominated. Putting these choices together with that from the first paragraph gives us, for $0\leq t < r$:

\begin{equation}\label{e:mtnr}
m^t_{n,r}(x) = \sum_{i=1}^r {r \choose i} x^i m^i_{n-1,r}(x) + x^{r-t} \sum_{i=0}^t {t \choose i} x^i m^{r-t+i}_{n-2,r}(x)  
\end{equation}

When $t=r$ we have all vertices of the left end already dominated, and so we are free to select an arbitrary subset of it
to create a dominating set, giving:

\[
m^r_{n,r}(x) = \sum_{i=0}^r {r \choose i} x^i m^i_{n-1,r}(x)
\]

These equations when used recursively with equations (\ref{e:pnkr}) and  (\ref{e:mtnr}), produce our result for any $n$ and $r$
since $t=0$ is the case when $M_{n,r}$ is dominated.
\end{proof}

\section{Domination polynomials of other graph products}\label{se:seq}
The purpose of this section is to place the domination polynomial in a
wider context.
So far we have considered only Cartesian products and strong products.
In this section, we investigate the domination polynomials of a
general class of graph products.
To do that, we compare the domination polynomial to the Tutte
polynomial and discuss
a common framework which contains both of these graph polynomials. The
common framework
is known to guarantee the existence of recurrence relations for
sequences of graph products.
We show that this implies that sequences coefficients of the
domination polynomial satisfy recurrence relations.
Since the results in this section apply to a wide class of graph
products, it is natural that they
given in a less exact form than results in previous sections.

In \cite{ar:NoyRibo}, M. Noy and A. Rib\'o considered the Tutte
polynomials of {\em recursively constructible} sequences of graphs.
A sequence $G_{1},G_{2},\ldots,G_{n},\ldots$ is recursively constructible
if it can be obtained from an inital graph by repeated application
of fixed elementary operations involving addition of vertices and
edges and deletion of edges. Some familiar recursively constructible
graph sequences are paths, cycles, 
stars and wheels. Noy and Rib\'o proved that for every such sequence, the
Tutte polynomials $T(G_{n},x,y)$ of the graphs in the sequence satisfy
a linear recurrence relation with coefficients in the polynomial ring
$\mathbb{Z}[x,y]$. 
In a recent paper \cite{ar:BGOP2012}, the authors disprove a conjecture of Noy and Rib\'{o} regarding the Tutte polynomials of recusively constructible families.

Noy and Rib\'o considered graph sequences obtained from various graph
products. They proved that if $G_{1},G_{2},\ldots,G_{n},\ldots$ is
a recursively constructible sequence of graphs and $H$ is a fixed graph,
then the sequences of graphs obtained by applying the Cartesian product
$\cp{G_{n}}{H}$, the strong product $G_{n}\boxtimes H$, the rooted product, 
the tensor 
product $G_{n}\times H$ and the join $G_{n}+H$ are all recursively
constructible, as are variants of them. 
This implies, for example, that several well known families are recursively
constructible, such as cyclic ladders ($\cp{C_n}{K_2}$) and,
for any fixed $t$, grids ($\cp{P_n}{P_t}$) and complete bipartite graphs ($K_{n,t}$).

E. Fischer and J. A. Makowsky generalized this result to a wide family
of graph polynomials, including in particular the domination polynomial
in \cite{ar:FM08}:
\begin{theorem}
(Corollary of \cite{ar:FM08}) Let $G_{1},G_{2},\ldots,G_{n},\ldots$
be a recursively constructible sequence of graphs
\footnote{Fischer and Makowsky actually use a wider definition, namely 
{\em iteratively constructible} sequences of graphs, which also covers 
e.g. cliques and complete bipartite graphs $K_{n,n}$.  } .
The sequences of domination polynomials of $G_{n}$ satisfy a linear
recurrence relation with coefficients in $\mathbb{Z}[x]$. 
\end{theorem}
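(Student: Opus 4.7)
The plan is to invoke the general framework of Fischer and Makowsky~\cite{ar:FM08}, which establishes that any graph polynomial definable in Monadic Second Order Logic (MSOL) satisfies a linear recurrence when evaluated along a recursively (or, more generally, iteratively) constructible sequence of graphs. The proof then reduces to two verifications: first, that the domination polynomial fits their MSOL-definability criterion; second, that the recursively constructible sequences of Noy and Rib\'o are covered by their theorem.

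For the first point, I would observe that the property ``$W$ is a dominating set of $G$'' is expressible in first-order logic (a fragment of MSOL): a subset $W\subseteq V(G)$ is dominating if and only if
\[
\forall v\,\bigl(v\in W \;\vee\; \exists u\,(u\in W \;\wedge\; uv\in E(G))\bigr),
\]
a formula with $W$ as a free set variable and only first-order quantification over vertices. Weighting each such $W$ by $x^{|W|}$ and summing produces $D(G,x)\in\mathbb{Z}[x]$, which is precisely the sort of MSOL-definable generating polynomial to which the Fischer-Makowsky transfer theorem applies.

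For the second point, I would note (as the footnote in the statement suggests) that the class of recursively constructible sequences defined by Noy and Rib\'o is contained in the class of iteratively constructible sequences considered by Fischer and Makowsky, since both are built from fixed vertex/edge additions and edge deletions applied to a starting graph. Applying the Fischer-Makowsky theorem to the MSOL-definable polynomial $D(G,x)$ along the sequence $G_1,G_2,\ldots$ then yields a linear recurrence with coefficients in $\mathbb{Z}[x]$ for $D(G_n,x)$.

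The main obstacle is expository rather than mathematical: conveying the invocation of~\cite{ar:FM08} without importing the full MSOL machinery into the paper. My plan is to confine the argument to exhibiting the explicit first-order sentence above, and then to cite~\cite{ar:FM08} for the passage from MSOL-definability to a linear recurrence. The fact that the coefficients lie in $\mathbb{Z}[x]$ rather than a larger ring follows because the domination polynomial depends on the single indeterminate $x$ and has integer coefficients, so the Fischer-Makowsky recurrence specializes accordingly.
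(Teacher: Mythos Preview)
Your proposal is correct and in fact supplies more detail than the paper does: the paper gives no proof of this theorem at all, simply stating it as a corollary of Fischer and Makowsky~\cite{ar:FM08} and noting that the domination polynomial belongs to the wide family of graph polynomials covered by their framework. Your sketch---exhibiting the first-order formula for ``$W$ is a dominating set'' and then invoking the MSOL transfer theorem of~\cite{ar:FM08}---is exactly the implicit justification the paper leaves to the reader, so your approach and the paper's are the same, with yours being more explicit.
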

Fischer and Makowsky's result is not constructive, in the sense that
their proof does not directly emit the desired recurrence relation.
Rather, it only attests to the existence of a recurrence relation. 

From \cite{ar:FM08} it follows that, for every fixed $i$, the sequence
$d_{|V(G_{n})|-i}(G_{n})$ of dominating sets of size $|V(G_{n})|-i$,
satisfies a linear recurrence relation over $\mathbb{Z}$. Note $|V(G_{n})|=cn+r$
for $c,r\in\mathbb{N}$ which do not depend on $n$. It is natural
to consider the number of dominating sets of size e.g. $d_{\left\lceil qn+p\right\rceil }(G_{n})$,
$p,q\in\mathbb{Q}$. Such dominating sets arise as the sizes of
minimum dominating sets of various graphs. For example, M. S. Jacobson
and L. F. Kinch showed in \cite{ar:JacobsonKinch83} the domination
number of the ladder $L_{n}$ is $\left\lceil \frac{n+1}{2}\right\rceil $.
T. Y. Chang and W. E. Clark showed in \cite{ar:ChangClark} that the
domination number of a $5\times n$ grids is $\left\lfloor \frac{6n+8}{5}\right\rfloor $
for large enough $n$. In fact, \cite{ar:JacobsonKinch83} and \cite{ar:ChangClark}
show that the dominating numbers of grids $t\times n$ for $1\leq t\leq6$
are roundings of numbers of the form $qn+p$. S. Alikhani and Y. Peng
considered the domination polynomials of paths in \cite{ar:AlikhaniPengPaths}.
In particular, they computed sequences of coefficients such as $d_{n}(P_{3n})$
and $d_{n+1}(P_{3n+2})$. Similar sequences of coefficients were considered
by Alikhani and Peng for cycles in \cite{ar:AlikhaniPengCertainII2010}. 

We consider the number of linear sized dominating sets. We prove:
\begin{theorem}\label{th:alg}
 Let $G_{1},G_{2},\ldots,G_{n},\ldots$ be a recursively
constructible sequence of graphs. Then the generating functions of 
\begin{enumerate}
\item $d_{\left\lfloor qn+p\right\rfloor }(G_{n})$ 
\item $d_{\left\lceil qn+p\right\rceil }(G_{n})$ 
\end{enumerate}

are algebraic for every $q,p\in\mathbb{Q}$.
\end{theorem}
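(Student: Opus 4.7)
The plan is to combine the cited Fischer--Makowsky corollary with the classical theorem that generalized diagonals of bivariate rational generating functions are algebraic (Furstenberg 1967; see also Stanley, \emph{Enumerative Combinatorics} Vol.~II, \S6.3). Since $D(G_n,x)$ satisfies a linear recurrence over $\mathbb{Z}[x]$, its bivariate generating function
\[
F(x,y) := \sum_{n\ge 0} D(G_n,x)\,y^n
\]
is a rational element of $\mathbb{Q}(x,y)$ and satisfies $[x^i y^n]F = d_i(G_n)$. Extracting $d_{\lfloor qn+p\rfloor}(G_n)$ as a sequence in $n$ therefore amounts to reading off the coefficients of $F$ along a rational-slope line.

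I would reduce to integer slope by writing $q=a/b$ in lowest terms with $b>0$ and splitting the target generating function $\sum_n d_{\lfloor qn+p\rfloor}(G_n)\,t^n$ according to the residue of $n$ modulo $b$. For $n=bm+r$ with $0\le r<b$,
\[
\lfloor qn+p\rfloor = am + c_r,\qquad c_r := \lfloor qr+p\rfloor\in\mathbb{Z},
\]
so the residue-$r$ contribution is $t^r\sum_{m\ge 0} d_{am+c_r}(G_{bm+r})(t^b)^m$. The arithmetic-progression subsequence $(G_{bm+r})_m$ inherits a linear recurrence over $\mathbb{Z}[x]$ (subsequences of a linear recurrent sequence at arithmetic progressions are linear recurrent), so $F_r(x,z):=\sum_{m\ge 0} D(G_{bm+r},x)\,z^m$ lies in $\mathbb{Q}(x,z)$. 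It therefore suffices to show: for any $R(x,z)\in\mathbb{Q}(x,z)$ and integers $a,c$, the series
\[
\Phi(t) := \sum_{m\ge 0} \bigl([x^{am+c}z^m]R(x,z)\bigr)\,t^m
\]
is algebraic over $\mathbb{Q}(t)$.

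For $a\ge 0$ a direct calculation gives $\Phi(t) = [x^c]\,R\bigl(x,\,t/x^a\bigr)$ as a formal series in $t$, treating $R(x,t/x^a)$ as a Laurent series in $x$ with coefficients in $\mathbb{Q}[[t]]$. This is a generalized diagonal of the rational function $R$, and its algebraicity over $\mathbb{Q}(t)$ is the Furstenberg--Stanley theorem, whose proof proceeds by expressing $\Phi(t)$ as a contour integral $\tfrac{1}{2\pi i}\oint R(x,t/x^a)\,x^{-c-1}\,dx$ and evaluating it as a sum of residues at poles of $R(x,t/x^a)$ in $x$ that are algebraic functions of $t$. The case $a<0$ is symmetric, and indices $am+c<0$ or $am+c>|V(G_{bm+r})|$ contribute only zeros and are handled trivially. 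Assembling the algebraic pieces $t^r H_r(t^b)$ over $0\le r<b$ yields part~(1).

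Part~(2) follows at once from the identity $\lceil x\rceil = \lfloor x\rfloor + \mathbf{1}[x\notin\mathbb{Z}]$: whether $qn+p$ is an integer depends only on the residue of $n$ modulo $b$, so on each residue class the ceiling merely shifts $c_r$ by $0$ or $1$, and the desired generating function is again a finite sum of series of the algebraic form already handled. The substantive ingredient I expect to be the real obstacle is the algebraicity of generalized rational diagonals, which I would cite as a black box; the remaining arithmetic-progression bookkeeping, the rationality of $b$-sections of $F$, and the passage from floor to ceiling are routine once that black box is in place.
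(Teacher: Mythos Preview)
Your argument is correct and shares the paper's overall architecture: rationality of $\sum_n D(G_n,x)\,y^n$ via the Fischer--Makowsky corollary, followed by an appeal to Furstenberg's theorem that diagonals of bivariate rational series are algebraic. The packaging, however, is different. You handle the rational slope $q=a/b$ by first splitting into residue classes modulo $b$, passing to $b$-sections of the bivariate series (which remain rational), and then invoking a generalized integer-slope diagonal on each piece; the floor-to-ceiling passage is then a per-class shift of $c_r$ by $0$ or $1$. The paper instead keeps everything in a single series: it forms the Hadamard product in the $t$-variable of $H_1(x,t)=\sum_{n,m} d_n(G_m)\,x^{n+m}t^m$ with the auxiliary series $R(t)=\sum_m x^{-\lfloor qm+p\rfloor}t^m$, which is rational over $\mathbb{Z}[x,x^{-1}]$ because $x^{-\lfloor q(m+b)+p\rfloor}=x^{-a}\cdot x^{-\lfloor qm+p\rfloor}$, and then takes one ordinary diagonal. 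Your route is a bit more pedestrian but entirely transparent and needs only standard facts about sections and diagonals; the paper's Hadamard-product trick is slicker and avoids any case analysis on residues, at the price of working with Laurent coefficients in $x$.
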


In particular, this implies that $d_{\left\lfloor qn+p\right\rfloor }(G_{n})$
and $d_{\left\lceil qn+p\right\rceil }(G_{n})$ are {\em P-recursive}
or {\em holonomic}, i.e. that they satisfy linear recurrence relations
with coefficients which are polynomials in $n$. 

\begin{example}
As a simple example of Theorem \ref{th:alg}, consider the sequence $d_n(\cp{K_n}{K_2})$. 
Theorem \ref{t:krk2} gives the following explicit expression for $D(K_r\Box K_2,x)$:
\[
(x+1)^{2n}-2(x+1)^n+2x^n+1
\]
From this expression we can easily extract $d_n(\cp{K_n}{K_2})$, which is the coefficient of $x^n$. 
In $(x+1)^{2n}$, the coefficient of $x^n$ is $\binom{2n}{n}$ using the binomial expansion of $(x+1)^{2n}$. 
The coefficient of $x^n$ in $-2(x+1)^n +2x^n+1$ is $0$. So,
\[
d_n(\cp{K_n}{K_2}) = \binom{2n}{n}
\]
is the central binomial coefficient, which is well-known to have an algebraic generating function. 
Similar expressions can be obtained for any fixed $r$ from Theorem \ref{t:krks} 

\end{example}

Theorem \ref{th:alg} can be applied to show that the number of {}``linearly
small'' (or {}``linearly large'') dominating sets is P-recursive.

\begin{theorem}
\label{th:small} Let $G_{1},G_{2},\ldots,G_{n},\ldots$ be a recursively
constructible sequence of graphs and let $q\in\mathbb{Q}$. Then the
generating functions of the number of dominating sets of size at most
(at least) $q|V(G_{n})|$ is algebraic. 
\end{theorem}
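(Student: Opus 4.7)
The argument proceeds in three stages: reduce ``at least'' to ``at most'', dispose of the easy case $q \geq 1$, then handle $0 \le q < 1$ by a bivariate-generating-function diagonal.

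First, the total number of dominating sets of $G_n$ is $D(G_n,1)$, and evaluating the $\mathbb{Z}[x]$-linear Fischer--Makowsky recurrence at $x=1$ yields a $\mathbb{Z}$-linear recurrence for this sequence; in particular $\sum_n D(G_n,1) t^n$ is rational, hence algebraic. The count of dominating sets of size at least $k$ equals this total minus the count of size at most $k-1$, so the ``at least'' statement reduces to the ``at most'' statement. Because $|V(G_n)| = cn+r$ with $c,r\in\mathbb{N}$, the threshold $\lfloor q|V(G_n)|\rfloor$ has the form $\lfloor q'n+p'\rfloor$ with $q',p'\in\mathbb{Q}$. When $q \geq 1$, this threshold eventually exceeds $|V(G_n)|$ and the partial sum equals $D(G_n,1)$, so algebraicity is immediate from the first reduction.

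Assume henceforth $0 \leq q < 1$ and set $S_n = \sum_{i=0}^{\lfloor q'n+p'\rfloor} d_i(G_n)$. Form the bivariate series $F(t,x) = \sum_n D(G_n,x) t^n$. The $\mathbb{Z}[x]$-linear recurrence for $\{D(G_n,x)\}$ makes $F(t,x)$ a rational function in $t$ over $\mathbb{Z}[x]$, and therefore $F(t,x) \in \mathbb{Q}(t,x)$. Multiplication by $1/(1-x)$ performs partial summation in the $x$-variable, so $H(t,x) := F(t,x)/(1-x)$ satisfies $[x^j t^n] H(t,x) = \sum_{i\leq j} d_i(G_n)$, and in particular $S_n = [x^{\lfloor q'n+p'\rfloor} t^n] H(t,x)$. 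Since $H(t,x)$ is again rational in $(t,x)$, its linear diagonal $\sum_n S_n t^n$ is algebraic by exactly the same extraction principle (Furstenberg's diagonal theorem in its linear-diagonal form) that establishes Theorem~\ref{th:alg}; one simply replaces $F$ by $F/(1-x)$ in the argument.

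The main obstacle is conceptual rather than computational. Theorem~\ref{th:alg}, used as a black box, gives algebraicity only of individual linear-diagonal coefficients $d_{\lfloor q'n+p'\rfloor}(G_n)$, whereas $S_n$ is a sum of $\Theta(n)$ such coefficients, and algebraic generating functions are not closed under sums of growing length. The key observation that unlocks the proof is that accumulating coefficients in $x$ is nothing more than multiplication by $1/(1-x)$ in the bivariate generating function, which preserves rationality in $(t,x)$; hence the diagonal-extraction machinery that proves Theorem~\ref{th:alg} applies verbatim to $F(t,x)/(1-x)$ and yields Theorem~\ref{th:small}.
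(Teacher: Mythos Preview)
Your proposal is correct and follows essentially the same route as the paper: the key step in both is to multiply the rational bivariate generating function by $\frac{1}{1-x}$ so that coefficient extraction yields partial sums, and then invoke the Furstenberg diagonal machinery already set up in the proof of Theorem~\ref{th:alg}. The only cosmetic difference is the order of operations: the paper first builds the shifted series $F(x,t)$ of Theorem~\ref{th:alg} and then multiplies by $\frac{1}{1-x}$, whereas you multiply $\sum_n D(G_n,x)t^n$ by $\frac{1}{1-x}$ first and then run the shift-and-diagonal argument; since all the operations involved preserve rationality, the two orderings are equivalent. Your explicit reduction of the ``at least'' case and the trivial case $q\geq 1$ is a little more careful than the paper's ``case (2) is similar''.
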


\subsection{Proofs}

We assume the reader is familiar with the basics of rational and algebraic
generating functions, as described e.g. in \cite{bk:StanleyII-1999}.
The following lemma is folklore:
\begin{lemma}
If $F(x)$ and $G(x)$ are rational generating functions, so is their
Hadamard product $F*G(x)$. 
\end{lemma}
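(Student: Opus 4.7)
The plan is to reduce the statement to the well-known characterization that a power series $F(x) = \sum_{n \geq 0} f_n x^n$ is rational if and only if the sequence $(f_n)$ is eventually quasi-polynomial, i.e.\ there exist finitely many scalars $\lambda_1, \dots, \lambda_k$ and polynomials $P_1, \dots, P_k$ such that $f_n = \sum_{i=1}^k P_i(n) \lambda_i^n$ for all $n$ sufficiently large. This equivalence follows from the partial fraction decomposition of $F(x)$ into terms of the form $c / (1 - \lambda x)^m$, together with the identity $[x^n]\,(1-\lambda x)^{-m} = \binom{n+m-1}{m-1} \lambda^n$.

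Granting this characterization, I would write $f_n = \sum_i P_i(n) \lambda_i^n$ and $g_n = \sum_j Q_j(n) \mu_j^n$ for $n$ large. Multiplying termwise yields
\[
f_n\, g_n = \sum_{i,j} \bigl(P_i(n)\, Q_j(n)\bigr)\,(\lambda_i \mu_j)^n,
\]
which is again a finite $\mathbb{C}$-linear combination of polynomials in $n$ times fixed geometric factors. By the same characterization applied in the reverse direction, $F*G(x) = \sum_n f_n g_n\, x^n$ is rational. The finitely many initial indices at which the quasi-polynomial formulas for $f_n$ and $g_n$ might not yet hold contribute only a polynomial to $F*G$, and polynomials are rational, so this initial discrepancy is absorbed harmlessly.

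The only genuinely computational step is verifying that each individual expression of the form $P(n)\,\lambda^n$ has a rational generating function. This follows from the observation that $\sum_n \lambda^n x^n = 1/(1-\lambda x)$ together with the fact that the operator $x\tfrac{d}{dx}$ preserves rationality and sends $\lambda^n x^n$ to $n\lambda^n x^n$; iterating gives rational generating functions for $n^k \lambda^n$, and then linearity covers arbitrary $P(n)\lambda^n$.

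I do not expect any real obstacle: the argument is purely formal manipulation, and the only thing to watch is that the finite initial segment of the sequences where the closed-form quasi-polynomial expression has not yet taken hold is correctly absorbed into a polynomial correction term. Once that bookkeeping is done, rationality of $F*G$ is immediate from the closure of the quasi-polynomial form under termwise multiplication.
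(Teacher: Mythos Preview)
Your argument is correct and is one of the standard proofs of this classical fact. Note, however, that the paper does not actually supply a proof of this lemma: it is stated as ``folklore'' and left unproved, with an implicit reference to \cite{bk:StanleyII-1999}. So there is no paper proof to compare against.

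Your route via the exponential-polynomial (quasi-polynomial) form of the coefficient sequence is clean and self-contained. An equally common alternative, which you might mention for completeness, avoids partial fractions over $\mathbb{C}$ altogether: one uses instead the characterization that $\sum_n f_n x^n$ is rational iff $(f_n)$ satisfies a linear recurrence with constant coefficients, and then observes that if $(f_n)$ and $(g_n)$ each satisfy such recurrences of orders $p$ and $q$, then the products $f_n g_n$ lie in the linear span of the finitely many sequences $(f_{n+i}\,g_{n+j})_{0\le i<p,\,0\le j<q}$, a shift-invariant finite-dimensional space, forcing a linear recurrence for $(f_n g_n)$. This version has the mild advantage of working directly over any commutative ring of coefficients, which is relevant since later in the paper the lemma is tacitly applied to Hadamard products in $t$ with coefficients in $\mathbb{Z}[x,x^{-1}]$ rather than in $\mathbb{Q}$.
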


We use the following theorem:
\begin{theorem}
[H. Furstenburg \cite{ar:HF67}\label{th:Furst}] If $F(s,t)$ is a rational generating
function in $s$ and $t$, then ${\rm diag}\, F$ is algebraic. 
\end{theorem}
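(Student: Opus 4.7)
The plan is to express the diagonal as a complex contour integral, isolate the poles that contribute via a Hensel-style factorization of the denominator, and then exploit the fact that a symmetric function of algebraic quantities is algebraic.

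Write $F(s,t)=P(s,t)/Q(s,t)$ with $Q(0,0)\neq 0$, so that $F$ admits a convergent power series expansion $F(s,t)=\sum_{m,n}a_{m,n}s^m t^n$ near the origin. Substituting $t=x/s$ gives
\[
F\!\left(s,\tfrac{x}{s}\right)=\sum_{m,n}a_{m,n}\, s^{m-n}x^n,
\]
so the coefficient of $s^{-1}$ in $F(s,x/s)/s$ is exactly $\mathrm{diag}(F)(x)$. Hence, for $|x|$ sufficiently small,
\[
\mathrm{diag}(F)(x)=\frac{1}{2\pi i}\oint_{|s|=\epsilon} F\!\left(s,\tfrac{x}{s}\right)\frac{ds}{s}.
\]
Clearing denominators, let $D=\deg_t Q$ and set $\widetilde{Q}(s,x):=s^D Q(s,x/s)$ and $\widetilde{N}(s,x):=s^{D-1}P(s,x/s)$; both are polynomials in $s$ whose coefficients are polynomials in $x$, and $F(s,x/s)/s=\widetilde{N}/\widetilde{Q}$. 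At $x=0$, $\widetilde{Q}(s,0)=s^D\,Q(s,0)$, so exactly $D$ roots of $\widetilde{Q}(\cdot,x)$ collapse to $s=0$ as $x\to 0$, while the remaining roots stay bounded away from $0$.

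Because the two root clusters are disjoint at $x=0$, Hensel's lemma in $\mathbb{C}[[x]][s]$ lifts the factorization $\widetilde{Q}(s,0)=s^D\cdot Q(s,0)$ to a unique factorization $\widetilde{Q}(s,x)=R(s,x)\,S(s,x)$ with $R(s,0)=s^D$ and $S(s,0)=Q(s,0)$, and with coefficients in $\mathbb{C}[[x]]$. Since $R$ is a factor of the polynomial $\widetilde{Q}\in\mathbb{C}[x][s]$ inside $\overline{\mathbb{C}(x)}[s]$, its coefficients are in fact algebraic over $\mathbb{C}(x)$. For $|x|$ small, the roots of $R(\cdot,x)$ are precisely the poles of $\widetilde{N}/\widetilde{Q}$ inside $|s|=\epsilon$, so by the residue theorem
\[
\mathrm{diag}(F)(x)=\sum_{\alpha:\,R(\alpha,x)=0}\mathrm{Res}_{s=\alpha}\frac{\widetilde{N}(s,x)}{\widetilde{Q}(s,x)}.
\]
Each summand is a rational expression in $\alpha$ and $x$, so the whole sum is a symmetric function of the roots of $R(\cdot,x)$, and therefore a rational function of the elementary symmetric functions of those roots, i.e.\ of the coefficients of $R$. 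Since those coefficients are algebraic over $\mathbb{C}(x)$, we conclude that $\mathrm{diag}(F)(x)$ is algebraic.

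The main obstacle is the step that separates the "small" roots from the "large" roots in an algebraically coherent fashion: being close to $0$ is an analytic inequality, not an algebraic condition, and naively summing residues over a subset of the roots of $\widetilde{Q}$ would not give a symmetric function. The essential input is that at $x=0$ the two clusters are disjoint (one is $\{0\}$ of multiplicity $D$, the other is $\{s:Q(s,0)=0\}$), which is exactly the coprimality hypothesis that allows Hensel's lemma to split $\widetilde{Q}$ into genuine polynomial factors $R$ and $S$; once this splitting is available, the remainder of the argument reduces to standard symmetric-function and residue-calculus manipulations.
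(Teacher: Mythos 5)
The paper gives no proof of this statement at all: it is quoted as Furstenberg's theorem and used as a black box, with the proof left to the cited reference. So the only comparison available is with the standard literature, and your argument is precisely the classical characteristic-zero proof (essentially the one in Stanley, \emph{Enumerative Combinatorics}, vol.~2, Theorem 6.3.3): write $\mathrm{diag}\,F$ as $\frac{1}{2\pi i}\oint_{|s|=\epsilon}F(s,x/s)\,\frac{ds}{s}$, observe that only the poles collapsing to $s=0$ contribute, and show the resulting residue sum is algebraic over $\mathbb{C}(x)$. The point you single out as the main obstacle --- that ``being a small root'' is analytic rather than algebraic, and must be replaced by ``being a root of the Hensel/Weierstrass factor $R$'' so that the residue sum becomes a symmetric function of the roots of a polynomial with coefficients algebraic over $\mathbb{C}(x)$ --- is indeed the crux, and you handle it correctly.

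Three small repairs are needed, none of them conceptual. First, with $D=\deg_t Q$ the numerator $\widetilde N(s,x)=s^{D-1}P(s,x/s)$ is a polynomial in $s$ only when $\deg_t P\le D-1$; take $D=\max(\deg_t Q,\,1+\deg_t P)$ instead, which leaves $\widetilde Q(s,0)=s^D Q(s,0)$ and hence the Hensel splitting intact. Second, your residue step tacitly assumes the small poles are simple: for a root of multiplicity $k$ the residue involves a $(k-1)$-st derivative and is not a symmetric rational expression in the individual roots in the naive sense. This is easily patched, either by noting that each such residue is still an algebraic function of $x$ (derivatives and finite sums of algebraic functions are algebraic), or by observing that $R(\cdot,x)$ has distinct roots outside a finite set of $x$ and algebraicity of a power series is unaffected by such exceptional points. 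Third, you should record why the contour $|s|=\epsilon$ lies in the annulus of convergence of the Laurent expansion of $F(s,x/s)$ (namely $|x|/\delta<|s|<\delta$ for $|x|$ small, where $F$ converges on the polydisc of radius $\delta$), since that is what legitimises reading off the $s^{-1}$ coefficient by integration. With these adjustments the proof is complete; note it is specific to $\mathbb{C}$ (Furstenberg's positive-characteristic version requires a different argument), which is all the paper needs.
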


We can now prove Theorem \ref{th:alg}:
\begin{proof}
[Proof of Theorem \ref{th:alg}] We prove case 1. Case 2 can be
proven analogously. Consider the generating function
\[
F(x,t)=\sum_{n,m=0}^{\infty}d_{n}(G_{m})x^{n+m-(\left\lfloor qm+p\right\rfloor )}t^{m}
\]
We will prove $F(x,t)$ is a rational function in $x$ and $t$. The
diagonal of $F$ is 
\[
{\rm diag}\, F(y)=\sum_{m=0}^{\infty}d_{\left\lfloor qm+p\right\rfloor }(G_{m})y^{m}
\]
and by Theorem \ref{th:Furst}, ${\rm diag}\, F(y)$ is algebraic. 

It remains to prove that $F(x,t)$ is rational. Since the sequence
$D(G_{m},x)$ satisfies a linear recurrence relation with coefficients
in $\mathbb{Z}[x]$, the power series 
\[
H(t)=\sum_{m=0}^{\infty}D(G_{m},x)t^{m}
\]
is rational. Using the definition of $D(G_{m},x)$, $H(t)$ can be
rewritten as a power series over $\mathbb{Q}$ with indeterminates
$x$ and $t$:
\[
H(x,t)=\sum_{n,m=0}^{\infty}d_{n}(G_{m})x^{n}t^{m}
\]
and note that $H(x,t)$ is rational. Substituting $t$ with $xt$
we get a new rational power series:
\[
H_{1}(x,t)=\sum_{n,m=0}^{\infty}d_{n}(G_{m})x^{n+m}t^{m}\,.
\]
Let
\[
R(t)=\sum_{m=0}^{\infty}x^{-\left\lfloor qm+p\right\rfloor }t^{m}\:.
\]
 We have $F(x,t)=H_{1}(t)*R(t)$. Using again the closure property of the Hadamard
product, $F(x,t)$ is rational if $R(t):=\sum_{m=0}^{\infty}r(m,x)t^{m}$ is.
Let $q=\frac{a}{b}$ with $a,b\in\mathbb{Z}$ and $b>0$ so that
$r(m,x)$ satisfies $r(m,x)=x^{-a}r(m-b,x)$, implying that $R(t)$ is rational. 
\end{proof}

We restate Theorem \ref{th:small} as follows:
\begin{theorem}
(Number of small dominating sets) Let $G_{1},G_{2},\ldots,G_{n},\ldots$
be a recursively constructible sequence of graphs. Then the generating
function of 

\[
{\displaystyle \sum_{i=0}^{\left\lfloor q'|V(G_{n})|+p'\right\rfloor }d_{i}(G_{n})}
\]

is algebraic for every $q',p'\in\mathbb{Q}$.

\end{theorem}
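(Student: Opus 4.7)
The plan is to reduce the statement to Theorem \ref{th:alg} via a standard partial-sum trick. First, since a recursively constructible sequence adds a fixed number $c$ of vertices per step, $|V(G_n)| = cn + r_0$ for constants $c, r_0 \in \mathbb{N}$, so $\lfloor q'|V(G_n)| + p'\rfloor = \lfloor qn + p\rfloor$ with $q = q'c$ and $p = q'r_0 + p'$ rational. It therefore suffices to prove that the generating function $\sum_m S_{m,\lfloor qm+p\rfloor}\, t^m$ is algebraic, where $S_{m,k} := \sum_{i=0}^{k} d_i(G_m)$ is the running partial sum of the coefficients of $D(G_m,x)$.

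Next, I would mimic the proof of Theorem \ref{th:alg} verbatim with $d_n(G_m)$ replaced by $S_{m,n}$. The key observation enabling this substitution is that dividing a power series by $1-x$ produces its running partial sums, so
\[
\frac{D(G_m,x)}{1-x} \;=\; \sum_{n\geq 0} S_{m,n}\, x^n.
\]
Since $H(x,t) = \sum_{m} D(G_m,x)\, t^m$ is rational in $x$ and $t$ (as established inside the proof of Theorem \ref{th:alg}), the product
\[
\tilde{H}(x,t) \;:=\; \frac{H(x,t)}{1-x} \;=\; \sum_{n,m\geq 0} S_{m,n}\, x^n t^m
\]
is again rational in $x$ and $t$. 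From here the argument of Theorem \ref{th:alg} transfers unchanged: substitute $t\mapsto xt$ to get a rational $\tilde{H}_1(x,t) = \sum_{n,m} S_{m,n}\, x^{n+m} t^m$, take its Hadamard product (in $t$) with the rational series $R(t) = \sum_m x^{-\lfloor qm+p\rfloor} t^m$ to obtain a rational $\tilde{F}(x,t)$, and invoke Furstenburg's Theorem \ref{th:Furst} to conclude that
\[
\mathrm{diag}\, \tilde{F}(y) \;=\; \sum_{m\geq 0} S_{m,\lfloor qm+p\rfloor}\, y^m
\]
is algebraic. The ``at least'' case is handled analogously with $\lceil\cdot\rceil$ in place of $\lfloor\cdot\rfloor$, or derived from the ``at most'' case by subtracting from the (clearly rational) generating function of $2^{|V(G_n)|}$.

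The only new ingredient beyond Theorem \ref{th:alg} is the insertion of the factor $1/(1-x)$ at the very start of the pipeline, and that is the one spot needing care, although it is routine: $1/(1-x)$ is rational in $x$ alone, so multiplying by it preserves rationality in $(x,t)$; it interacts harmlessly with the subsequent substitution $t\mapsto xt$ (which only touches $t$); and the Hadamard product is also taken in $t$. Consequently every closure property used in the proof of Theorem \ref{th:alg} continues to apply, and the algebraicity conclusion survives intact.
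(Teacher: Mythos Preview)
Your proposal is correct and essentially the same as the paper's proof: both reduce to $\lfloor qn+p\rfloor$ via $|V(G_n)|=cn+r$, then insert a factor $1/(1-x)$ to turn individual coefficients into partial sums and apply Furstenberg's theorem to the diagonal of the resulting rational bivariate series. The only cosmetic difference is the placement of that factor: the paper multiplies the already-constructed $F(x,t)$ by $1/(1-x)$ at the end, whereas you divide $H(x,t)$ by $1-x$ at the start and rerun the pipeline; since multiplication by $1/(1-x)$ commutes with both $t\mapsto xt$ and the $t$-Hadamard product (as you observe), the two constructions yield the same rational $\tilde F=F_1$.
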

\begin{proof}
Let $c$ and $r$ be the natural numbers guaranteed such that $|V(G_{n})|=cn+r$.
Let $q=q'c$ and $p=p' + rq'$. Then we need to consider the generating
function of 
\[
{\displaystyle \sum_{i=0}^{\left\lfloor qn+p\right\rfloor }d_{i}(G_{n})}
\]

In the proof of Theorem \ref{th:alg} we proved $F(x,t)$ is rational.
Consider 
\[
F_{1}(x,t)=F(x,t)\cdot\frac{1}{1-x}\,.
\]
$F_{1}$ is given by 
\[
F_{1}(x,t)=\sum_{n,m=0}^{\infty}\sum_{i=0}^{\infty}d_{n}(G_{m})x^{i+n+m-(\left\lfloor qm+p\right\rfloor )}t^{m}\,.
\]
We have that $i+n+m-\left\lfloor qm+p\right\rfloor =m$ iff $i+n=\left\lfloor qm+p\right\rfloor $.
Hence, 
\[
{\rm diag}\, F(y)=\sum_{m=0}^{\infty}\,\sum_{
\substack{i,n\in\mathbb{N}:\\ i+n=\left\lfloor qm+p\right\rfloor}
}d_{n}(G_{m})y^{m}=\sum_{m=0}^{\infty}\,\sum_{n=0}^{\left\lfloor qm+p\right\rfloor }d_{n}(G_{m})y^{m}\,.
\]
 Again case (2) is similar. 
\end{proof}

\section{An application to complexity}

In this section we show an application of Theorem \ref{t:spgkr} to the
Turing complexity of the domination polynomial. We assume the reader
is familiar with the basics of counting complexity theory as described
e.g. in \cite[Chapter 17]{bk:AroraBarak}.

Many graph polynomials studied in the literature have been shown to
be $\#P$-hard to compute with respect to Turing reductions, and the
domination polynomial is no exception. This follows from the $\#P$-completeness
of the number of dominating sets. This remains true even on restricted
graph classes, see e.g. \cite{pr:KijimaOU11}.

A common further step towards understanding the complexity of a particular
graph polynomial is to assess the hardness of computation of its evaluations.
A classic result of this kind can be found in \cite{ar:JVW1990},
where it is shown that the Tutte polynomial is $\#P$-hard to compute
for any rational evaluation, except those in a semi-algebraic set
of low dimension which are polynomial-time computable. Similar dichotomy
theorems have been shown for the cover polynomial \cite{ar:BlaserDell07},
the interlace polynomial \cite{pr:BlaserHoffmann08} and the edge
elimination polynomial and its specializations \cite{ar:Hoffmann10}. 

For any fixed $\gamma\in\mathbb{Q}$, we denote by $D(-,\gamma)$
the problem of computing for an input graph $G$ the evaluation $D(G,\gamma)$
of the domination polynomial. We limit ourselves to rational evaluations
and remain within the realm of Turing complexity in order to avoid
a discussion of appropriate computation models which is not central
to this paper. 
\begin{theorem}
\label{th:complexity} Given $\gamma\in\mathbb{Q}\backslash\{0,-1,-2\}$,
the graph parameter $D(-,\gamma)$ is $\#P$-hard to compute.
\end{theorem}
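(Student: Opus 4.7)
The natural approach is a Turing reduction from the problem of counting dominating sets (which the preceding discussion notes is $\#P$-hard, even on restricted graph classes), using Theorem \ref{t:spgkr} together with polynomial interpolation. Recall that $D(G,1)$ counts all dominating sets of $G$, so it suffices to show that an oracle for $D(-,\gamma)$ allows us to compute $D(H,1)$ in polynomial time for an arbitrary input graph $H$ with $n$ vertices.

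Given $H$, for each $r \in \{1,2,\ldots,n+1\}$ I would construct the graph $G_r := \st{H}{K_r}$, which has $rn$ vertices and is built in polynomial time. A single oracle call to $D(-,\gamma)$ on $G_r$ returns, by Theorem \ref{t:spgkr}, the value
\[
D(G_r,\gamma) \;=\; D\bigl(H,(\gamma+1)^r-1\bigr) \;=\; D(H,y_r), \qquad y_r := (\gamma+1)^r - 1 .
\]
Since $D(H,y)$ is a polynomial in $y$ of degree at most $n$, the $n+1$ values $D(H,y_1),\ldots,D(H,y_{n+1})$ suffice to reconstruct $D(H,y)$ by Lagrange interpolation, provided the points $y_1,\ldots,y_{n+1}$ are pairwise distinct. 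Once $D(H,y)$ is recovered as a polynomial (with rational coefficients, computed in polynomial time from rational data), evaluating at $y=1$ yields the total number of dominating sets of $H$, completing the reduction.

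The main obstacle, and the reason for excluding exactly the values $\{0,-1,-2\}$, is guaranteeing the distinctness of the interpolation points $y_r = (\gamma+1)^r - 1$. The $y_r$ are distinct iff the powers $(\gamma+1)^r$ are distinct across $r=1,\ldots,n+1$. For rational $\gamma+1$, this fails precisely when $\gamma+1 \in \{0,1,-1\}$, i.e.\ $\gamma \in \{-1,0,-2\}$: in the first two cases the sequence is constant, and in the third it takes only the two values $0$ and $-2$. For every other rational $\gamma$, either $\gamma+1>0$ with $\gamma+1\ne 1$ (so $(\gamma+1)^r$ is strictly monotone) or $\gamma+1<0$ with $\gamma+1\ne -1$ (so the absolute values $|\gamma+1|^r$ are strictly monotone), and distinctness follows. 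Thus the reduction produces polynomially many distinct evaluation points whenever $\gamma \notin \{0,-1,-2\}$, giving $\#P$-hardness of $D(-,\gamma)$ on that set.
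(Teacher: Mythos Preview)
Your proof is correct and follows essentially the same route as the paper: construct $\st{H}{K_r}$ for $r=1,\ldots,n+1$, invoke Theorem~\ref{t:spgkr} to turn oracle calls at $\gamma$ into evaluations of $D(H,\cdot)$ at the points $(\gamma+1)^r-1$, and interpolate. Your treatment of the distinctness of the interpolation nodes is in fact more detailed than the paper's, which simply asserts that $\gamma\notin\{0,-1,-2\}$ guarantees the $(\gamma+1)^r-1$ are pairwise distinct.
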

\begin{proof}
Let $\gamma\in\mathbb{Q}\backslash\{0,-1,-2\}$. We show an algorithm
which on an input graph $G$ with $n$ vertices computes $D(G,x)$
in polynomial time in $n$ using an oracle to $D(-,\gamma)$. Since
$D(G,x)$ is $\#P$-hard, so is $D(-,\gamma)$. The algorithm is as
follows:
\begin{enumerate}
\item For every $r\in\{1,\ldots,n+1\}$, compute 
$D(G\boxtimes K_{r},\gamma)=D(G,(1+\gamma)^{r}-1)$.
$D(G\boxtimes K_{r},\gamma)$ is computed using the oracle to $D(-,\gamma)$
(and $D(G,(1+\gamma)^{r}-1)$ is obtained by Theorem \ref{t:spgkr}). 
\item Interpolate $D(G,x)$ from the values 
\[
\left(x_{0},D(G,x_{0})\right)=\left((1+\gamma)^{i}-1),D(G,(1+\gamma)^{i}-1)\right)\,,
\]
$i=1,\ldots,n+1$.
\end{enumerate}

$D(G,x)$ can be interpolated from the computed values since the values
$(1+\gamma)^{r}-1$ are pairwise distinct (because $\gamma\not=0,-1,-2$)
and $D(G,x)$ has degree $n$. 

\end{proof}

\section{Conclusion}
In this paper we studied the domination polynomials of families of graphs given by products. 
While our results cover some important families of graphs obtained by products, there remain some open problems which we believe deserve attention.

Our work can be extended by finding analogous formulae to more families:
\begin{problem}
~
\begin{enumerate}
\item
How can Theorem \ref{t:GK2} be extended to deal with basic Cartesian product families such as $\cp{G}{K_s}$, 
$\cp{G}{P_s}$, $\cp{G}{C_s}$, etc.?
\item
Can analogs of Theorem \ref{t:spgkr} be found for $\st{G}{P_s}$, $\st{G}{C_s}$, etc.?
\item
What other families of graphs obtained using graph products have simple explicit formulae in the spirit of Theorem \ref{t:krks}?
\end{enumerate}
\end{problem}

We have not considered families such as square grids $\cp{P_n}{P_n}$:
\begin{problem}
What can be proven about families $G_n \oplus H_n$ for some graph product $\oplus$ of recursively constructible families of graphs, both in general and for important special cases such as grids? 
\end{problem}

Theorem 6.1 leaves open the complexity of two special evaluations of the domination polynomial, which 
are not solved using a product-based reduction. $D(-,0)$ is clearly polynomial-time computable, but the following
remains open:
\begin{problem}
How hard to compute are the graph parameters $D(-,-1)$ and $D(-,-2)$? 

$D(G,-1)$ has been studied from a combinatorial point of view in
\cite{ar:KPT12GCOM}. To our knowledge $D(G,-2)$ has not received
attention in the literature. 
\end{problem}

\section*{Acknowledgement}

We would like to thank Janos Makowsky for a useful discussion which
led to the inclusion of Theorem \ref{th:complexity} in this paper. 

\nocite{*}

\end{document}